\begin{document}
\newcommand{\per}{{\rm per}}
\newtheorem{teorema}{Theorem}
\newtheorem{lemma}{Lemma}
\newtheorem{utv}{Proposition}
\newtheorem{svoistvo}{Property}
\newtheorem{sled}{Corollary}
\newtheorem{con}{Conjecture}
\newtheorem{zam}{Remark}
\newtheorem{const}{Construction}

\author{A. A. Taranenko \thanks{Sobolev Institute of Mathematics, Novosibirsk, Russia.
e-mail: taa@math.nsc.ru. The work is supported in part by Young Russian Mathematics award.}}
\title{Regularity and counting lemmas for multidimensional matrices}
\date{}
\maketitle
\begin{abstract}
In the present paper we propose generalizations of the regularity and counting lemmas for multidimensional matrices under a finite alphabet.  Firstly, we prove a variant of a multidimensional regularity lemma with the help of a translation of $\varepsilon$-regularity from graphs to matrices. Next, we state that this $\varepsilon$-regularity is sufficient for obtaining a matrix analogue of the counting lemma for $2$-dimensional matrices but not for higher-dimensional cases. Finally, we introduce $\varepsilon$-regular patterns that allow us to deduce a multidimensional counting lemma.
\end{abstract}

\section*{Introduction}

A regularity method is a powerful tool yielding many results in extremal combinatorics, especially, in extremal graph and hypergraph theory. The method is based on three statements that are known as regularity, counting, and removal lemmas. This paper aims to state analogues of the regularity and counting lemmas for $2$-dimensional matrices and matrices of higher dimensions.

For the sake of completeness and for revealing the similarities between graph and matrix results, we briefly remind the main concepts and lemmas of the standard regularity method.  

Let $G= (V,E)$ be a simple graph with a vertex set $V$ and an edge set $E$, and let $A$ and $B$ be disjoint vertex subsets. The density $\rho(A,B)$ between sets $A$ and $B$ is 
$$\rho(A,B) = \frac{e(A,B)}{|A||B|},$$
where $e(A,B)$ denotes the number of edges between $A$ and $B$.

Given $\varepsilon > 0$, the pair of sets $(A,B)$ is said to be $\varepsilon$-regular if for all $X \subset A$ and $Y \subset B$ such that $|X| \geq \varepsilon |A|$ and $|Y| \geq \varepsilon |B|$ it holds
$$|\rho(X,Y) - \rho(A,B)| \leq \varepsilon.$$

A partition  $V_0 \sqcup V_1 \sqcup  \ldots \sqcup V_k $  of the vertex set $V$ into disjoint subsets with the exceptional set $V_0$ is called a balanced partition if all $V_i$, except $V_0$, have the same size. 

Given $\varepsilon > 0 $, a balanced partition $V_0 \sqcup V_1 \sqcup  \ldots \sqcup V_k $ is said to be $\varepsilon$-regular if $|V_0| \leq \varepsilon |V|$  and all but at most $\varepsilon k^2$ pairs $(V_i, V_j)$ for $i \neq j$ are $\varepsilon$-regular.

The key result of the graph regularity method is the Szemer\'edi's regularity lemma. It firstly appeared in~\cite{semteor} as an auxiliary lemma for proving that long arithmetic progressions exist in any dense enough subsets of natural numbers. Later the lemma was separately stated in~\cite{semlem}.

\begin{teorema}[Szemer\'edi's regularity lemma] \label{mainlem}
For every $\varepsilon > 0 $  there is  $T = T(\varepsilon)$ such that any graph  $G = (V,E)$  with the vertex set of size at least  $T$ has an $\varepsilon$-regular partition $V_0 \sqcup V_1 \sqcup  \ldots \sqcup V_t $ with $t \leq T$.
\end{teorema}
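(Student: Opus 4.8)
The plan is to run the standard \emph{energy increment} argument, attaching to each partition a bounded monotone potential and showing that lack of regularity forces this potential to grow. To a balanced partition $\mathcal{P} = V_0 \sqcup V_1 \sqcup \cdots \sqcup V_k$ I would associate its \emph{index} (mean-square density)
$$\mathrm{ind}(\mathcal{P}) = \sum_{0 \le i,\, j \le k} \frac{|V_i||V_j|}{|V|^2}\, \rho(V_i,V_j)^2,$$
which always lies in $[0,1]$ because each density is in $[0,1]$ and the weights sum to $1$. The entire proof is driven by the tension between two facts: the index can only increase under refinement, yet it is capped by $1$, so only boundedly many substantial increases can occur.

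First I would establish monotonicity: if $\mathcal{Q}$ refines $\mathcal{P}$, then $\mathrm{ind}(\mathcal{Q}) \ge \mathrm{ind}(\mathcal{P})$. This follows directly from the Cauchy--Schwarz inequality (equivalently, convexity of $x \mapsto x^2$), since the density between two parts is a weighted average of the densities between their sub-parts, and replacing an average of squares by the square of the average can only decrease the quantity.

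The crux, and the step I expect to be the main obstacle, is the \emph{energy boost} extracted from irregularity. If a pair $(V_i, V_j)$ fails to be $\varepsilon$-regular, there are witnesses $X \subseteq V_i$ and $Y \subseteq V_j$ with $|X| \ge \varepsilon|V_i|$, $|Y| \ge \varepsilon|V_j|$ and $|\rho(X,Y) - \rho(V_i,V_j)| > \varepsilon$. Splitting $V_i$ into $X, V_i \setminus X$ and $V_j$ into $Y, V_j \setminus Y$ and quantifying the Cauchy--Schwarz defect shows that the contribution of this single pair to the index grows by at least $\varepsilon^4 \tfrac{|V_i||V_j|}{|V|^2}$. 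Since a partition that is not $\varepsilon$-regular (but whose exceptional set is already controlled) has more than $\varepsilon k^2$ irregular pairs, refining simultaneously along all their witnesses yields a common refinement whose index exceeds $\mathrm{ind}(\mathcal{P})$ by at least roughly $\varepsilon^5$, using that the irregular pairs carry a proportional share of the total weight $\tfrac{1}{k^2}$ each. Making these constants fit together cleanly while not letting the refinement inflate the number of parts beyond control is the delicate part.

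Finally I would iterate. Starting from any initial balanced partition, as long as the current one is not $\varepsilon$-regular the previous step produces a refinement gaining at least $\varepsilon^5$ in index; since the index never exceeds $1$, this can happen at most $\varepsilon^{-5}$ times, so after a bounded number of rounds an $\varepsilon$-regular partition must appear. Each round multiplies the number of parts by a bounded factor (each $V_i$ splitting into at most $2^{k}$ pieces), which compounds into the characteristic tower-type bound $T(\varepsilon)$. Two remaining tasks are routine bookkeeping: re-balancing the parts to equal size after each refinement and sweeping the resulting small remainders into the exceptional set $V_0$ so that $|V_0| \le \varepsilon|V|$ is maintained throughout. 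Both only require $|V|$ to be sufficiently large, which is exactly the hypothesis $|V| \ge T$.
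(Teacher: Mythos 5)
Your proposal is the standard energy-increment argument and is sound: the index function, its monotonicity under refinement via Cauchy--Schwarz, the $\varepsilon^4$-boost extracted from an irregular pair, and the iteration capped by $\mathrm{ind}(\mathcal{P}) \leq 1$ are exactly the classical proof. The paper itself does not prove Theorem~\ref{mainlem} (it is quoted from Szemer\'edi's original papers), but this is precisely the scheme the paper follows for its matrix analogue, Theorem~\ref{semmulti}: your index corresponds to the function $\tilde{\varphi}$, monotonicity under refinement to Lemma~\ref{convmono}, the upper bound to Lemma~\ref{boundonphi}, and the irregularity boost together with the bookkeeping of part sizes and the exceptional class to Lemmas~\ref{increasephi} and~\ref{goodrefine}.
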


The counting lemma for graphs is the following quite simple observation.

\begin{lemma}[Counting lemma] \label{countgr}
For all  $t \in \mathbb{N}$, $\rho > 0$ and $\delta > 0$ there is $\varepsilon = \varepsilon(t,\rho,\delta)$ such that the following holds. If $G$ is $t$-partite graph with parts $V_1, \ldots, V_t$, $|V_i| = n$ and each pair $(V_i, V_j)$ is $\varepsilon$-regular of density $\rho$ then the number of complete subgraphs $K_t$ in  $G$ is $(1 \pm \delta) \rho^{{t \choose 2}} n^t$.
\end{lemma}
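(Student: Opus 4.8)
The plan is to count copies of $K_t$ by a greedy, one-vertex-at-a-time embedding, using $\varepsilon$-regularity to control, at each step, the size of the set of candidates joined to all previously chosen vertices. I identify a copy of $K_t$ with an ordered tuple $(v_1,\dots,v_t)\in V_1\times\cdots\times V_t$ of pairwise adjacent vertices, and for a partial choice $(v_1,\dots,v_s)$ I write $T_j^{(s)}=V_j\cap N(v_1)\cap\cdots\cap N(v_s)$ for the candidate set in $V_j$, $j>s$, with $T_j^{(0)}=V_j$. The basic tool is the degree form of regularity: in an $\varepsilon$-regular pair $(A,B)$ of density $\rho$, for every $Y\subseteq B$ with $|Y|\ge\varepsilon|B|$ all but at most $2\varepsilon|A|$ vertices $a\in A$ satisfy $(\rho-\varepsilon)|Y|\le\deg(a,Y)\le(\rho+\varepsilon)|Y|$; this follows by applying the defining inequality to $\{a:\deg(a,Y)<(\rho-\varepsilon)|Y|\}$ and to $\{a:\deg(a,Y)>(\rho+\varepsilon)|Y|\}$, since if either had size $\ge\varepsilon|A|$, summing degrees would push its density with $Y$ outside $[\rho-\varepsilon,\rho+\varepsilon]$. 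Call $v_{s+1}\in V_{s+1}$ \emph{good} for $(v_1,\dots,v_s)$ if it respects both inequalities with $Y=T_j^{(s)}$ for every $j>s+1$ with $|T_j^{(s)}|\ge\varepsilon n$; by the above there are at most $2t\varepsilon n$ non-good vertices.

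For the lower bound I would embed greedily, maintaining the invariant $|T_j^{(s)}|\ge(\rho-\varepsilon)^s n$ for all $j>s$. Once $\varepsilon$ is small relative to $\rho$ and $t$ we have $(\rho-\varepsilon)^s n\ge\varepsilon n$ for $s\le t-1$, so the regularity threshold is always met; hence at step $s+1$ there are at least $(\rho-\varepsilon)^s n-2t\varepsilon n$ good choices of $v_{s+1}$ inside $T_{s+1}^{(s)}$, each keeping $|T_j^{(s+1)}|\ge(\rho-\varepsilon)|T_j^{(s)}|\ge(\rho-\varepsilon)^{s+1}n$. Multiplying the number of choices over $s=0,\dots,t-1$ yields at least
$$n^t\prod_{s=0}^{t-1}\bigl((\rho-\varepsilon)^s-2t\varepsilon\bigr)$$
copies, and this product tends to $\rho^{\binom{t}{2}}$ as $\varepsilon\to0$, so it exceeds $(1-\delta)\rho^{\binom{t}{2}}$ for $\varepsilon$ small enough.

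For the upper bound I would run the same process but track all extensions, classifying a copy by whether it ever uses a non-good vertex. Along a \emph{typical} copy (all chosen vertices good) the two-sided bound holds at each step, so $|T_j^{(s)}|$ stays between $(\rho-\varepsilon)^s n$ and $(\rho+\varepsilon)^s n$; in particular it never drops below $\varepsilon n$, the process never degenerates, and the number of candidates for $v_{s+1}$ is at most $(\rho+\varepsilon)^s n$, giving at most $(\rho+\varepsilon)^{\binom{t}{2}}n^t$ typical copies. A non-typical copy has a first step $s+1$ using a non-good vertex; its prefix is typical, hence above threshold, so there are at most $2t\varepsilon n$ such vertices, at most $n^s$ prefixes, and at most $n^{t-s-1}$ completions, i.e. at most $2t\varepsilon n^t$ copies per step and $2t^2\varepsilon n^t$ in total. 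Adding the contributions bounds the count by $(\rho+\varepsilon)^{\binom{t}{2}}n^t+2t^2\varepsilon n^t\le(1+\delta)\rho^{\binom{t}{2}}n^t$ for $\varepsilon$ small.

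The delicate part is precisely this upper-bound bookkeeping: unlike in the lower bound I cannot discard the non-good vertices, and a single non-good extension could in principle inflate a candidate set back toward its full size. The resolution hinges on the observation that a candidate set can only shrink below the applicability threshold $\varepsilon n$ after a non-good step has already occurred, so charging each non-typical copy to its first non-good step keeps the error at $O(t^2\varepsilon n^t)$. Fixing the quantifiers—choosing $\varepsilon$ small enough that $(\rho-\varepsilon)^{t-1}\ge\varepsilon$ and that both product estimates land inside the $(1\pm\delta)$ window—is then routine.
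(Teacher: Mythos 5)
Your proof is correct, but note that the paper itself gives no proof of this lemma: it is quoted in the introduction as known background (a ``quite simple observation''), so there is no in-paper argument to compare against directly. The closest analogue in the paper is the proof of Theorem~\ref{count2dim}, the counting lemma for $2$-dimensional matrices, which shares the greedy, one-step-at-a-time philosophy of your lower bound: there the author builds the target submatrix column by column, using Lemma~\ref{regularline} (most lines of an $\varepsilon$-regular block have density at least $\rho_\sigma - \varepsilon$) in the role your degree form of regularity plays. The essential difference is that the paper's matrix statements are one-sided --- they only assert lower bounds on the number of copies --- so the paper never has to confront the part you rightly single out as delicate, namely the upper bound required by the two-sided conclusion $(1 \pm \delta)\rho^{{t \choose 2}}n^t$. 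Your handling of it is sound: classifying copies by the first step that uses a non-good vertex, observing that a candidate set can only drop below the applicability threshold $\varepsilon n$ after such a step (so the degree form is always legitimately applied along typical prefixes), and charging each bad copy to that first step for a total error of at most $2t^2\varepsilon n^t$. The quantifier bookkeeping --- choosing $\varepsilon$ so that $(\rho-\varepsilon)^{t-1}\ge\varepsilon$ and so that both the product lower bound and the sum $(\rho+\varepsilon)^{{t \choose 2}} + 2t^2\varepsilon$ land inside the $(1\pm\delta)$ window --- is also correct, since both expressions converge to $\rho^{{t \choose 2}} > 0$ as $\varepsilon \to 0$.
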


Combination of the regularity and the counting lemma produces the graph removal lemma. One of the first forms of this lemma was established by Ruzsa and Szemer\'edi in~\cite{ruzsa}.

\begin{teorema}[Removal lemma]
For any graph $H$ on $m$ vertices and any $\varepsilon > 0$ there exists $\delta > 0$ for which the following holds.  If $G$ is a graph on $n$ vertices such that one needs to delete at least $\varepsilon n^2$ edges of $G$ to destroy all copies of $H$ in $G$ then the graph $G$ contains at least $\delta n^m$ copies of $H$.
\end{teorema}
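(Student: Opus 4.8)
The plan is to argue the direct form of the statement: assuming that destroying all copies of $H$ in $G$ requires deleting at least $\varepsilon n^2$ edges, I will produce at least $\delta n^m$ copies of $H$. The engine is a regularize--clean--count pipeline. First I would fix auxiliary parameters $\rho_0 > 0$ and $\varepsilon' > 0$, both small in terms of $\varepsilon$ and $m$, and apply Szemer\'edi's regularity lemma (Theorem~\ref{mainlem}) to $G$ with parameter $\varepsilon'$. This yields an $\varepsilon'$-regular balanced partition $V_0 \sqcup V_1 \sqcup \cdots \sqcup V_t$ with $t \leq T(\varepsilon')$; by starting from a sufficiently fine initial partition (the standard strengthening of the lemma) I may additionally assume $t \geq 2/\varepsilon$, which is needed below.

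Next I would clean the graph, deleting every edge of $G$ that falls into one of four types: edges meeting the exceptional set $V_0$; edges lying inside a single part $V_i$; edges in a pair $(V_i,V_j)$ that fails to be $\varepsilon'$-regular; and edges in a pair $(V_i,V_j)$ whose density $\rho(V_i,V_j)$ is below the threshold $\rho_0$. Each class is easy to bound: the edges at $V_0$ number at most $|V_0|\,n \le \varepsilon' n^2$; the within-part edges number at most $\sum_i \binom{|V_i|}{2} \le n^2/(2t) \le \varepsilon n^2/4$ by the lower bound on $t$; the irregular pairs number at most $\varepsilon' t^2$ and so carry at most $\varepsilon'\, n^2$ edges; and the low-density pairs carry fewer than $\rho_0\, n^2/2$ edges. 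Choosing $\varepsilon' \le \varepsilon/8$ and $\rho_0 \le \varepsilon/4$ makes the total number of deleted edges strictly less than $\varepsilon n^2$. Since by hypothesis fewer than $\varepsilon n^2$ deletions cannot destroy all copies of $H$, the cleaned graph $G'$ still contains at least one copy of $H$.

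I would then read off the structure of this surviving copy. Because all within-part edges and all edges at $V_0$ were removed, the $m$ vertices of the copy lie in nonexceptional parts and every edge of $H$ is realized between two \emph{distinct} parts; moreover each such pair is $\varepsilon'$-regular of density at least $\rho_0$. Recording which parts the vertices occupy gives an assignment of the vertices of $H$ to clusters in which every edge of $H$ corresponds to a regular, dense pair. Applying the counting lemma --- in the form for an arbitrary pattern $H$ rather than just $K_t$, which follows by the same vertex-by-vertex regularity argument that proves Lemma~\ref{countgr} --- to these clusters shows that the number of copies of $H$ using this assignment is at least $(\rho_0^{e(H)} - o(1)) \prod_a |V_{j_a}| \ge \delta n^m$ for a suitable $\delta = \delta(H,\rho_0,\varepsilon') > 0$, which is the required conclusion.

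The main obstacle is organizing the hierarchy of constants so that the two competing estimates hold simultaneously: $\varepsilon'$ and $\rho_0$ must be small enough that cleaning removes fewer than $\varepsilon n^2$ edges, yet the counting lemma needs $\varepsilon'$ chosen \emph{after} $\rho_0$ and $H$ so that its error term stays below $\rho_0^{e(H)}/2$. Since $T(\varepsilon')$ grows uncontrollably as $\varepsilon' \to 0$, one must fix the order of quantification carefully, namely $H,\varepsilon \mapsto \rho_0 \mapsto \varepsilon' \mapsto T \mapsto \delta$, and check that the resulting $\delta$ is independent of $n$. A secondary technical point is justifying the arbitrary-$H$ counting lemma and the assumption $t \ge 2/\varepsilon$; both are routine strengthenings of the stated results and I would treat them briefly.
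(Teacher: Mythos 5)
Your proposal is correct and is the standard regularity-method proof of the removal lemma: regularize with Szemer\'edi's lemma, clean out edges at the exceptional set, inside parts, in irregular pairs and in sparse pairs (in total fewer than $\varepsilon n^2$, so a copy of $H$ survives), then feed the surviving copy's cluster assignment into an embedding form of the counting lemma, with the constants quantified in the order $H,\varepsilon \mapsto \rho_0 \mapsto \varepsilon' \mapsto T \mapsto \delta$. Note that the paper itself gives no proof of this theorem --- it is quoted as background and attributed to Ruzsa and Szemer\'edi --- but your argument is exactly the ``combination of the regularity and the counting lemma'' that the paper says produces it, so there is nothing to compare beyond observing that your write-up correctly fills in that one-line remark (the two points you flag, the lower bound $t \geq 2/\varepsilon$ on the number of parts and the counting lemma for an arbitrary pattern $H$ with possibly repeated clusters, are indeed routine strengthenings).
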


The next stage in the development of the regularity method was an extension on hypergraphs.
The main obstacle in this way was finding a notion of $\varepsilon$-regularity for hypergraphs such that both the regularity lemma and the counting lemma hold. 

The most natural generalizations of the Szemer\'edi regularity lemma for hypergraphs are proved in papers~\cite{czy,prom} but in~\cite{conter} Nagle and R\"odl constructed an example of a hypergraph showing that these generalizations do not imply an analogue of the counting lemma.

An appropriate regularity lemma for hypergraphs was firstly proved by R\"odl and Skokan in~\cite{hypsem}, and later using a different technique another regularity lemma was obtained by Gowers in~\cite{hypsem2}. An accompanying counting and removal lemmas are proved in~\cite{hypcount}. Notably, all of these results required quite sophisticated notions of regularity for hypergraphs and tedious techniques.

The correspondence between graphs and their adjacency matrices (and between hypergraphs and their multidimensional adjacency matrices) allows us to aim for the regularity method for matrices.  Analogues of the Szemer\'edi's regularity lemma for $2$-dimensional matrices were found  in~\cite{scott}, and some version of this lemma for multidimensional tensors was stated in~\cite{fri}. A removal lemma for $2$-dimensional matrices with respect to a set of submatrices closed under permutations was proved in~\cite{fischer}, and some removal lemmas for row and column ordered appearances of submatrices were recently obtained in~\cite{alonben, alonfish}.  

The main aim of this paper is to give some generalizations of the regularity and counting lemmas for multidimensional matrices.  In Section~\ref{secregmulti}, we prove a variant of the regularity lemma for multidimensional matrices based on a natural translation of $\varepsilon$-regularity from graphs to matrices. In Section~\ref{seccount2}, we state that for $2$-dimensional matrices this notion of regularity is sufficient to obtain a matrix analogue of the counting lemma. Meanwhile, in Section~\ref{seccountmulti}, we provide an example showing that the $\varepsilon$-regularity is too weak for a multidimensional counting lemma. Then we introduce a concept of an $\varepsilon$-regular pattern that will be enough for proving a variant of a counting lemma for multidimensional matrices.

\section{Regularity lemma for multidimensional matrices}\label{secregmulti}

We start the section with definitions required for stating the regularity lemma and with some other basic concepts. 

A \textit{$d$-dimensional matrix $A$ of sizes $n^1 \times \ldots \times n^d$ over an alphabet $\Sigma$} is an array $(a_\alpha)_{\alpha \in I}$, $a_\alpha \in\Sigma$, where the set of indices $I = \left\{ (\alpha_1, \ldots , \alpha_d):\alpha_i \in \left\{1,\ldots,n^i \right\}\right\}$.  We will say that $A$ has \textit{order $n$} if all  sizes $n^i$ equal $n$. If the alphabet $\Sigma$ consists of only two symbols $\{ 0,1\}$ we will say that $A$ is a \textit{binary matrix.}

A \textit{block} $B$ of sizes $m^1 \times \cdots \times m^d$ of a $d$-dimensional matrix $A$ is an arbitrary submatrix of $A$ with such sizes.  

Given symbol $\sigma \in \Sigma$ and a  $d$-dimensional matrix (or a block) $A$ of sizes $n^1 \times \cdots \times n^d$,  the \textit{weight $w_{\sigma}(A)$ of symbol $\sigma$} in $A$ is the number of appearances of $\sigma$ in $A$, the \textit{volume} $|A|$ is $\prod\limits_{i=1}^d n^i$ (number of entries in $A$), and the \textit{density}  $\rho_\sigma(A)$ of the symbol $\sigma$ is
$$\rho_\sigma(A) = \frac{w_\sigma(A)}{|A|}.$$ If $A$ is a binary matrix then we define the \textit{density} $\rho(A)$ to be  the density of symbol  $1$.

A \textit{block partition} of a $d$-dimensional matrix $A$ of order $n$ is a system of  blocks $\mathcal{B} = \{ B_{\beta}\}_{\beta \in J}$, where the index set is $J = \{ (\beta_1, \ldots, \beta_d)| \beta_i \in \{ 1, \ldots, t^i\}  \}$, such that  $B_\beta$ are disjoint blocks  of sizes $m^1_{\beta_1} \times \cdots \times m^d_{\beta_d}$ and the union of all blocks composes the matrix $A$. 
A block partition $\mathcal{C} = \{ C_{\gamma}\}$ of the matrix $A$ is called a \textit{refinement} of a block partition $\mathcal{B} = \{ B_{\beta}\}$ if for each block $B_\beta$ the system of blocks $\mathcal{C}_\beta = \{ C_\gamma | C_\gamma \subset B_\beta \}$ is a block partition of $B_\beta$.

Given $\varepsilon > 0$, a $d$-dimensional  matrix $A$ of sizes $n^1 \times \cdots \times n^d$  over  $\Sigma$   is called \textit{$\varepsilon$-regular} if for each $\sigma \in \Sigma$ and for any block $B$ in $A$ with sizes $m^1 \times \cdots \times m^d$, where $m^i \geq \varepsilon n^i$ for all $i = 1, \ldots, d$, it holds
$$|\rho_\sigma(B) - \rho_{\sigma}(A)| \leq \varepsilon.$$
Otherwise the matrix $A$ is said to be \textit{$\varepsilon$-irregular}.

Before we define an $\varepsilon$-regular block partition, recall that an $\varepsilon$-regular partition of graph vertices consists of a sufficiently small exceptional part and many ``good'' ordinary parts. Acting in a similar way, for a given block partition $\mathcal{B}$  we fix a division of all its blocks into two classes: a set of \textit{ordinary blocks}, whose union composes some submatrix of $A$, and a set of \textit{exceptional blocks}.  Unless otherwise stated, we assume  that a refinement $\mathcal{C}$ of a block partition $\mathcal{B}$ preserves the class of blocks:  if a block $B_\beta$ was exceptional (ordinary) for a block partition $\mathcal{B}$ then all blocks of $\mathcal{C}_\beta$ will be also exceptional (ordinary) in its refinement $\mathcal{C}$.

 Having fixed such a division into ordinary and exceptional blocks, let the \textit{cardinality} $|\mathcal{B}|$  of a block partition $\mathcal{B} = \{ B_{\beta}\}$  be the number of  its ordinary blocks. We will say that the block partition $\mathcal{B}$ is a \textit{balanced} partition of order $t$ if all its ordinary blocks have the same order $m$ and the union of all ordinary blocks is a matrix of order $tm$.  In particular,  the cardinality of every balanced block partition of order $t$ of a $d$-dimensional matrix equals $t^d$.  

We say that a balanced block partition $\mathcal{B} = \{ B_{\beta}\}$ of a $d$-dimensional matrix $A$ is  \textit{$\varepsilon$-regular} if  the sum of volumes of all exceptional blocks is not greater than $\varepsilon |A|$ and all ordinary blocks except, probably, at most $\varepsilon |\mathcal{B}|$ of them are $\varepsilon$-regular. Otherwise, a balanced block partition $\mathcal{B}$ is called \textit{$\varepsilon$-irregular}.

The main result of this section is the following theorem that naturally generalizes the regularity lemma for the case of multidimensional matrices.

\begin{teorema} \label{semmulti}
For every $1/2 > \varepsilon > 0$ there is $N, T \in \mathbb{N}$ such that for every matrix $A$ of order $n \geq N$   over a finite alphabet $\Sigma$ there exists an $\varepsilon$-regular block partition $\mathcal{B} = \{ B_{\beta}\}$  with the cardinality $|\mathcal{B}|$ bounded by $T$.
\end{teorema}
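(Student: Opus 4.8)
The plan is to run Szemer\'edi's energy-increment argument, with the graph index replaced by a matrix analogue. For a block partition $\mathcal{B} = \{B_\beta\}$ of $A$ (counting ordinary and exceptional blocks alike) I would define the \emph{index}
$$q(\mathcal{B}) = \frac{1}{|A|}\sum_{B \in \mathcal{B}} |B| \sum_{\sigma \in \Sigma} \rho_\sigma(B)^2.$$
Since $\sum_{\sigma}\rho_\sigma(B) = 1$ we have $0 \le \sum_\sigma \rho_\sigma(B)^2 \le 1$, and $\sum_B |B| = |A|$, so $q(\mathcal{B}) \in [0,1]$; this boundedness is what will cap the number of refinement steps. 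The first routine fact to record is monotonicity under refinement: if $\mathcal{C}$ refines $\mathcal{B}$ then $q(\mathcal{C}) \ge q(\mathcal{B})$. This is just convexity of $x \mapsto x^2$, because $\rho_\sigma(B)$ is the $\tfrac{|C|}{|B|}$-weighted average of $\rho_\sigma(C)$ over the sub-blocks $C \subset B$, so $\rho_\sigma(B)^2 \le \sum_{C \subset B}\tfrac{|C|}{|B|}\rho_\sigma(C)^2$; summing over $\sigma$ and over $B$ gives the claim.

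The heart of the matter is the index increment produced by a single $\varepsilon$-irregular ordinary block $B$ of order $m$. By $\varepsilon$-irregularity there are a symbol $\sigma_0$ and a sub-block $B' = S_1 \times \cdots \times S_d$ with each $|S_i| \ge \varepsilon m$, hence $p := |B'|/|B| \ge \varepsilon^d$, such that $|\rho_{\sigma_0}(B') - \rho_{\sigma_0}(B)| > \varepsilon$. I would refine $B$ by cutting, along every axis $i$, the index set of $B$ into $S_i$ and its complement, producing a refinement of $B$ into at most $2^d$ product sub-blocks, one of which is $B'$. The complement $B \setminus B'$ is \emph{not} itself a block, but the $2^d$-block refinement refines the two-set coarsening $\{B', B \setminus B'\}$, so by monotonicity the refined index of $B$ is at least that of this coarsening. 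Writing $a' = \rho_{\sigma_0}(B')$, $a'' = \rho_{\sigma_0}(B \setminus B')$ and $a = p a' + (1-p)a'' = \rho_{\sigma_0}(B)$, the defect form of Cauchy--Schwarz gives $p a'^2 + (1-p)a''^2 - a^2 = p(1-p)(a'-a'')^2$, and from $(1-p)|a'-a''| = |a-a'| > \varepsilon$ one gets $p(1-p)(a'-a'')^2 > p\varepsilon^2 \ge \varepsilon^{d+2}$. Thus refining $B$ raises its contribution to $q$ by at least $\tfrac{|B|}{|A|}\varepsilon^{d+2} = \tfrac{m^d}{n^d}\varepsilon^{d+2}$.

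I would then iterate from the trivial balanced partition of order $t=1$ (the whole matrix, no exceptional blocks). At each stage, maintaining as an invariant that the exceptional volume is at most $\varepsilon|A|$ — so that $\varepsilon$-irregularity of the current partition forces \emph{more than} $\varepsilon|\mathcal{B}| = \varepsilon t^d$ ordinary blocks to be irregular — I form the common refinement that cuts, along each axis, every ordinary part by all the witness subsets coming from the irregular blocks meeting it. Restricted to each irregular block this refinement refines the corresponding two-set coarsening, so the per-block surpluses add and $q$ grows by more than $\varepsilon t^d \cdot \tfrac{m^d}{n^d}\varepsilon^{d+2} = \varepsilon^{d+3}(tm)^d/n^d \ge \tfrac12\varepsilon^{d+3}$, using $(tm)^d \ge (1-\varepsilon)n^d$ from the invariant together with $\varepsilon < 1/2$. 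Since $q \le 1$, the process halts after at most $2\varepsilon^{-(d+3)}$ refinements, and on termination the partition is $\varepsilon$-regular by construction.

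The hard part is the bookkeeping needed to keep the partition \emph{balanced} while preserving the invariant. Along a fixed axis each ordinary part may be cut by up to $t^{d-1}$ witness subsets (one per irregular block using that part), hence into up to $2^{t^{d-1}}$ atoms; the number of ordinary parts per axis therefore grows tower-like, which is exactly why $T$ can only be bounded by an iterated exponential in $\varepsilon$, $d$ and $|\Sigma|$, just as in the graph lemma. After refining, the atoms have unequal sizes, so to restore a common ordinary order $m'$ I would cut each atom into equal chunks and shift the at-most-one leftover chunk per atom into the exceptional sets. Choosing $N$ large forces these rounding losses, accumulated over the bounded number of steps, to stay below $\varepsilon|A|$, which preserves the invariant; checking this, and checking that re-chunking is itself a refinement so that the index surplus is never lost, is where essentially all the care lies.
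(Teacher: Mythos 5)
Your proposal is correct and follows essentially the same route as the paper: the paper's energy function $\tilde{\varphi}$ is (up to normalization and its treatment of exceptional blocks via $w_\sigma$) your index $q$, its Lemma~\ref{increasephi} is your defect Cauchy--Schwarz increment coming from the witness-induced $2^d$-block cut, and its Lemma~\ref{goodrefine} performs exactly your common refinement followed by re-chunking into a balanced partition with leftovers declared exceptional. The only differences are cosmetic: the paper starts the iteration from an arbitrary balanced partition rather than the trivial one, and it controls the per-step exceptional loss by an explicit choice of the new ordinary block order $l = n/(t4^{dt})$ (so that the losses form a summable sequence independent of $n$), with $N$ large needed only to keep these blocks nonempty, rather than attributing the loss control to $N$ itself as you phrase it.
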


Our proof of the theorem follows the lines of paper~\cite{scott} and is based on a standard (for proofs of regularity lemmas) argument of a substantiation increment of certain function after a suitable refinement of an $\varepsilon$-irregular partition.
For convenience, the proof is divided into a series of lemmas. 

Let us introduce functions $\overline{\varphi}$ and $\tilde{\varphi}$ defined on blocks and block partitions respectively. Given symbol $\sigma \in \Sigma$ and  block $B$ of a block partition $\mathcal{B} = \{ B_{\beta}\}$ of a matrix $A$, we put $\overline{\varphi_\sigma}(B) = \rho^2_\sigma(B) |B| $ if $B$ is an ordinary block and $\overline{\varphi_\sigma}(B) = w_\sigma(B)$ in case of an exceptional block $B$.  Next, we put $\overline{\varphi} (B) = \sum_{\sigma} \overline{\varphi_\sigma} (B)$.  At last, for a block partition $\mathcal{B}$ we introduce function  $\tilde{\varphi} (\mathcal{B}) = \sum_{\beta} \overline{\varphi} (B_\beta)$. Note that for every block $B$  it holds $\rho^2_\sigma(B)|B| \leq w_\sigma(B)$, therefore moving $B$ from  ordinary blocks to the set of exceptional blocks of a block partition $\mathcal{B}$ does not decrease the function $\tilde{\varphi} (\mathcal{B})$. 

We start with a proof a fact that the function $\tilde{\varphi}$ is non-decreasing under refinements.

\begin{lemma}  \label{convmono}
Let $A$ be a multidimensional matrix over an alphabet $\Sigma$, and let a block partition $\mathcal{C} = \{ C_\gamma\}$ of the matrix $A$ be a refinement of a block partition $\mathcal{B} = \{ B_\beta\}$ of $A$. Then $\tilde{\varphi} (\mathcal{C}) \geq \tilde{\varphi} (\mathcal{B})$.
\end{lemma}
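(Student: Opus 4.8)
The plan is to exploit the fact that a refinement $\mathcal{C}$ subdivides each block $B_\beta$ of $\mathcal{B}$ independently, so that the difference $\tilde{\varphi}(\mathcal{C}) - \tilde{\varphi}(\mathcal{B})$ decomposes as a sum over $\beta$ of local contributions $\sum_{C_\gamma \subset B_\beta} \overline{\varphi}(C_\gamma) - \overline{\varphi}(B_\beta)$. Since $\overline{\varphi} = \sum_\sigma \overline{\varphi_\sigma}$, it suffices to prove, for every fixed block $B_\beta$ and every fixed symbol $\sigma \in \Sigma$, the single inequality $\sum_{C_\gamma \subset B_\beta} \overline{\varphi_\sigma}(C_\gamma) \geq \overline{\varphi_\sigma}(B_\beta)$; summing over $\sigma$ and then over $\beta$ yields the lemma. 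Here I would invoke the class-preservation convention stated just before the lemma, which guarantees that all pieces $C_\gamma \subset B_\beta$ inherit the class (ordinary or exceptional) of $B_\beta$, so that the same defining formula for $\overline{\varphi_\sigma}$ applies on both sides of the inequality.

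For an exceptional block $B_\beta$ the inequality is in fact an equality: all $C_\gamma \subset B_\beta$ are then exceptional, so $\overline{\varphi_\sigma}(C_\gamma) = w_\sigma(C_\gamma)$, and since the $C_\gamma$ form a block partition of $B_\beta$ the weights add, $\sum_\gamma w_\sigma(C_\gamma) = w_\sigma(B_\beta) = \overline{\varphi_\sigma}(B_\beta)$. The only genuine content is therefore the ordinary case, where I must show $\sum_{C_\gamma \subset B_\beta} \rho^2_\sigma(C_\gamma)|C_\gamma| \geq \rho^2_\sigma(B_\beta)|B_\beta|$.

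Rewriting each term via $\rho^2_\sigma(C)|C| = w_\sigma(C)^2/|C|$, and recalling that $\sum_\gamma w_\sigma(C_\gamma) = w_\sigma(B_\beta)$ and $\sum_\gamma |C_\gamma| = |B_\beta|$, this reduces to the convexity estimate
$$\sum_\gamma \frac{w_\sigma(C_\gamma)^2}{|C_\gamma|} \geq \frac{\left(\sum_\gamma w_\sigma(C_\gamma)\right)^2}{\sum_\gamma |C_\gamma|},$$
which follows by applying the Cauchy--Schwarz inequality to the vectors with entries $w_\sigma(C_\gamma)/\sqrt{|C_\gamma|}$ and $\sqrt{|C_\gamma|}$ (equivalently, from the convexity of $x \mapsto x^2$ applied to the values $\rho_\sigma(C_\gamma)$ with weights $|C_\gamma|/|B_\beta|$, whose weighted average is exactly $\rho_\sigma(B_\beta)$). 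I do not expect any real obstacle here: the single nontrivial ingredient is this standard convexity inequality, and the remaining work is the bookkeeping of verifying the additive decomposition over blocks and checking that the class-preservation hypothesis reduces everything to the two cases above. The remark preceding the lemma, namely the pointwise bound $\rho^2_\sigma(B)|B| \leq w_\sigma(B)$, is not needed for this argument, although it reassures us that the ordinary and exceptional conventions are mutually consistent.
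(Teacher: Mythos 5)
Your proof is correct and follows essentially the same route as the paper's: the same decomposition of $\tilde{\varphi}(\mathcal{C})-\tilde{\varphi}(\mathcal{B})$ into per-block contributions, the same equality for exceptional blocks via additivity of weights, and the same convexity estimate for ordinary blocks (the paper phrases it as Jensen's inequality for $x\mapsto x^2$ with weights $|C_\gamma|/|B_\beta|$, which is exactly the alternative formulation you note is equivalent to your Cauchy--Schwarz step).
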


\begin{proof}
By the definition of a refinement, each exceptional block $B_\beta$ of $\mathcal{B}$ is divided into a system $\mathcal{C}_\beta$  of exceptional blocks  of $\mathcal{C}$. So, for exceptional blocks $B_\beta$ we have $ \tilde{\varphi} (\mathcal{C}_\beta) = \overline{\varphi}(B_\beta).$

Let us prove that $\tilde{\varphi}(\mathcal{C}_\beta) \geq \overline{\varphi}(B_\beta)$ for each ordinary block $B_\beta$ and its  partition $\mathcal{C}_\beta$ into ordinary blocks. 
By the definition, 
$$\tilde{\varphi}(\mathcal{C}_\beta) = \sum\limits_{C \subset B_\beta} \overline{\varphi} (C) = \sum\limits_{\sigma \in \Sigma} \sum\limits_{C \subset B_\beta} \rho^2_\sigma(C) |C|.$$
Since $f(x) = x^2$ is a convex function and $\sum\limits_{C \subset B_\beta} |C| = |B_\beta|$, the Jensen's inequality gives
\begin{gather*}
\tilde{\varphi}(\mathcal{C}_\beta) \geq \sum\limits_{\sigma} |B_\beta| \left( \sum\limits_{C \subset B_\beta} \frac {\rho_\sigma(C) |C|}{|B_\beta|}  \right)^2 = \sum\limits_{\sigma}  |B_\beta| \left( \sum\limits_{C \subset B_\beta} \frac {w_\sigma(C)}{|B_\beta|}  \right)^2 \\
  = \sum\limits_{\sigma} |B_\beta|  \rho^2_\sigma(B_\beta)  = \sum\limits_{\sigma} \overline{\varphi_\sigma}(B_\beta) = \overline{\varphi} (B_\beta). 
\end{gather*}
\end{proof}

Next, we bound the value of $\tilde{\varphi} (\mathcal{B})$ for any block partition $\mathcal{B}$ of a matrix $A$.

\begin{lemma} \label{boundonphi}
If $\mathcal{B}$ is a block partition of a multidimensional matrix $A$ then $\tilde{\varphi}(\mathcal{B}) \leq |A|$.
\end{lemma}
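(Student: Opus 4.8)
The plan is to bound $\tilde{\varphi}(\mathcal{B})$ by splitting the sum over blocks into the contributions from ordinary blocks and exceptional blocks, and showing that each block's contribution $\overline{\varphi}(B_\beta)$ is at most its volume $|B_\beta|$; since the volumes of disjoint blocks partitioning $A$ sum to $|A|$, the bound follows. The key observation was already recorded in the excerpt immediately before Lemma~\ref{convmono}: for every block $B$ and every symbol $\sigma$, one has $\rho_\sigma^2(B)|B| \leq w_\sigma(B)$, because $\rho_\sigma(B) = w_\sigma(B)/|B| \leq 1$ and hence $\rho_\sigma^2(B)|B| = \rho_\sigma(B)\, w_\sigma(B) \leq w_\sigma(B)$.

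Concretely, first I would fix an arbitrary block $B_\beta$ of $\mathcal{B}$ and bound $\overline{\varphi}(B_\beta)$. If $B_\beta$ is ordinary, then $\overline{\varphi}(B_\beta) = \sum_\sigma \rho_\sigma^2(B_\beta)|B_\beta| \leq \sum_\sigma w_\sigma(B_\beta)$ by the inequality above; if $B_\beta$ is exceptional, then by definition $\overline{\varphi}(B_\beta) = \sum_\sigma w_\sigma(B_\beta)$ exactly. In either case $\overline{\varphi}(B_\beta) \leq \sum_\sigma w_\sigma(B_\beta)$. Now the crucial point is that every entry of $B_\beta$ carries exactly one symbol of $\Sigma$, so the weights partition the volume: $\sum_{\sigma \in \Sigma} w_\sigma(B_\beta) = |B_\beta|$. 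Therefore $\overline{\varphi}(B_\beta) \leq |B_\beta|$ for each $\beta$.

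Finally I would sum over all blocks. Since $\mathcal{B} = \{B_\beta\}$ is a block partition of $A$, the blocks are disjoint and their union is $A$, so $\sum_\beta |B_\beta| = |A|$. Combining this with the per-block bound gives
$$\tilde{\varphi}(\mathcal{B}) = \sum_\beta \overline{\varphi}(B_\beta) \leq \sum_\beta |B_\beta| = |A|,$$
which is the desired inequality.

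This lemma is essentially a routine calculation rather than a deep statement, so there is no real obstacle. The only point requiring minor care is to handle the ordinary and exceptional cases uniformly and to invoke the identity $\sum_\sigma w_\sigma(B_\beta) = |B_\beta|$, which holds precisely because $\Sigma$ is the full alphabet and each entry contributes to exactly one symbol's weight. Everything else is immediate from the definitions of $\overline{\varphi}$, $\tilde{\varphi}$, and a block partition.
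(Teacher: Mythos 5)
Your proof is correct, but it takes a different route from the paper. The paper deduces the bound as a two-line corollary of Lemma~\ref{convmono}: since the partition into singleton blocks refines any block partition $\mathcal{B}$ (preserving block classes), the monotonicity of $\tilde{\varphi}$ under refinements gives $\tilde{\varphi}(\mathcal{B}) \leq \tilde{\varphi}(\mathcal{S})$, and one checks directly that $\tilde{\varphi}(\mathcal{S}) = |A|$ because every singleton block contributes exactly $1$. You instead bound each block's contribution in place: $\overline{\varphi}(B_\beta) \leq \sum_\sigma w_\sigma(B_\beta) = |B_\beta|$, using $\rho_\sigma(B_\beta) \leq 1$ for ordinary blocks and the definition for exceptional ones, then sum volumes over the partition. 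Your argument is more elementary and self-contained --- it needs neither the Jensen-inequality machinery behind Lemma~\ref{convmono} nor any consideration of refinements, only the observation $\rho^2_\sigma(B)|B| \leq w_\sigma(B)$ that the paper itself records just before Lemma~\ref{convmono}, together with the identity $\sum_{\sigma \in \Sigma} w_\sigma(B) = |B|$. The paper's version buys brevity by reusing a lemma it has already paid for; yours buys logical independence, which could matter if one wanted this volume bound without first establishing monotonicity. Both are complete and give the same bound.
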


\begin{proof}
By Lemma~\ref{convmono}, the function $\tilde{\varphi}$ achieves the maximum value on a block partition $\mathcal{S}$  into singleton blocks. It is easy to see that $\tilde{\varphi}(\mathcal{S}) = |A|$.
\end{proof}

The aim of the following lemma is to show that if $B$ is not an $\varepsilon$-regular matrix then there is a block partition $\mathcal{C}$ of $B$ with $\tilde{\varphi}(\mathcal{C})$ substantively greater than $\overline{\varphi}(B)$.

\begin{lemma} \label{increasephi}
Let $ \varepsilon > 0$. Assume that a $d$-dimensional matrix $B$ of order $n$  is $\varepsilon$-irregular. Then there exists a block partition $\mathcal{C}  = \{ C_\gamma\} $  of the matrix $B$ into  $2^d$ ordinary blocks such that
$$\tilde{\varphi}(\mathcal{C}) \geq \overline{\varphi} (B) + \varepsilon^{d+2} |B|.$$
\end{lemma}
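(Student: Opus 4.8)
The plan is to exploit the definition of $\varepsilon$-irregularity: since $B$ is $\varepsilon$-irregular, there exists a symbol $\sigma$ and a block $D \subset B$ of sizes $m^1 \times \cdots \times m^d$ with each $m^i \geq \varepsilon n$ such that $|\rho_\sigma(D) - \rho_\sigma(B)| > \varepsilon$. The idea is to use this ``witnessing'' block $D$ to cut $B$ along each coordinate axis, producing a block partition $\mathcal{C}$ that refines the trivial partition $\{B\}$, and then to show that the density deviation of $D$ forces $\tilde{\varphi}(\mathcal{C})$ to exceed $\overline{\varphi}(B)$ by the claimed amount.

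\medskip

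First I would construct $\mathcal{C}$ explicitly. For each coordinate $i$, the block $D$ occupies an interval (or, after a harmless reordering, a subset) of indices in direction $i$; cutting $B$ in direction $i$ into ``inside $D$'' and ``outside $D$'' slices yields at most two pieces per axis, hence a block partition of $B$ into at most $2^d$ ordinary blocks, exactly as required, with $D$ itself being one of the resulting blocks. Next, by the same Jensen/convexity computation used in Lemma~\ref{convmono}, refining $\{B\}$ into $\mathcal{C}$ can only increase $\tilde{\varphi}$; the task is to quantify this increase. The key inequality I would isolate is a quantitative strengthening of Jensen: if a block $B$ is split into pieces $C$ with $\sum |C| = |B|$, then
$$
\sum_{C} \rho_\sigma^2(C)|C| - \rho_\sigma^2(B)|B| = \sum_{C} \bigl(\rho_\sigma(C) - \rho_\sigma(B)\bigr)^2 |C|,
$$
which is just the bias--variance decomposition of the weighted mean. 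Restricting the right-hand sum to the single term coming from $C = D$ gives the lower bound $\bigl(\rho_\sigma(D) - \rho_\sigma(B)\bigr)^2 |D| > \varepsilon^2 |D|$.

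\medskip

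It remains to bound $|D|$ from below. Since every side length of $D$ satisfies $m^i \geq \varepsilon n$, we have $|D| = \prod_i m^i \geq (\varepsilon n)^d = \varepsilon^d |B|$. Combining this with the displayed identity for the single symbol $\sigma$ yields
$$
\tilde{\varphi}(\mathcal{C}) - \overline{\varphi}(B) \geq \varepsilon^2 |D| \geq \varepsilon^2 \cdot \varepsilon^d |B| = \varepsilon^{d+2}|B|,
$$
where I use that the contributions of all other symbols and all other blocks are nonnegative (each term $\bigl(\rho_\tau(C) - \rho_\tau(B)\bigr)^2 |C| \geq 0$). This gives the desired bound $\tilde{\varphi}(\mathcal{C}) \geq \overline{\varphi}(B) + \varepsilon^{d+2}|B|$.

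\medskip

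The main obstacle I anticipate is bookkeeping rather than genuine difficulty: I must verify that cutting along the witness block $D$ in all $d$ directions really produces a \emph{valid} block partition in the paper's sense (disjoint blocks of product-of-intervals shape whose union is $B$) and that it uses at most $2^d$ blocks, so that Lemma~\ref{convmono}'s convexity argument applies verbatim and $D$ appears as one whole block of $\mathcal{C}$. A secondary subtlety is making sure the single-symbol, single-block lower bound is legitimate — i.e. that dropping all other nonnegative contributions in the variance decomposition is the right move and that the factor $\varepsilon^2$ survives even though the irregularity hypothesis gives strict inequality $> \varepsilon$ (which only helps). Once the partition is set up correctly, the estimate itself is the short computation above.
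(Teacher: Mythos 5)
Your proposal is correct and follows essentially the same route as the paper: both use the witnessing block to induce the $2^d$-block partition, and your ``bias--variance'' identity is exactly the paper's expansion $\rho_{\sigma,\gamma}^2 = \rho_\sigma^2 + 2\delta_{\sigma,\gamma}\rho_\sigma + \delta_{\sigma,\gamma}^2$ with the vanishing cross term, followed by the same single-term lower bound $\varepsilon^2|D| \geq \varepsilon^{d+2}|B|$.
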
 

\begin{proof}
Let $\rho_\sigma$ denote the density $\rho_\sigma(B)$ of the matrix $B$.
Since $B$ is $\varepsilon$-irregular, there exists $\tau \in \Sigma$ and a block $C$ of $B$ of sizes $m^1 \times  \cdots \times m^d $ such that $m^i \geq \varepsilon n$ for all $i$ and  $|\rho_\tau(C) - \rho_\tau| \geq \varepsilon$.   

The block $C$ naturally  induces the block partition $\mathcal{C} = \{C_\gamma \}$  of the $d$-dimensional matrix $B$   into $2^d$ ordinary blocks, with the block $C$ being one of blocks of the partition $\mathcal{C}$. For shortness, let $\rho_{\sigma ,\gamma}$ denote the density $\rho_\sigma(C_\gamma)$ of symbol $\sigma$ in the block $C_\gamma$.

For each block $C_\gamma$ we put $\delta_{\sigma, \gamma} = \rho_{\sigma, \gamma} - \rho_\sigma$. 
Let us estimate $\tilde{\varphi}(\mathcal{C})$. By definitions, 
$$\tilde{\varphi}(\mathcal{C}) = \sum\limits_{\sigma} \sum\limits_\gamma |C_\gamma| \rho_{\sigma,\gamma}^2 = \sum\limits_{\sigma} \sum\limits_\gamma |C_\gamma| \left(\rho^2_\sigma + 2 \delta_{\sigma,\gamma} \rho_\sigma + \delta_{\sigma,\gamma}^2 \right).$$

Note that for each $\sigma$ we have $\sum\limits_\gamma |C_\gamma| \rho_\sigma^2 = |B| \rho_{\sigma}^2 = \overline{\varphi_\sigma}(B) $ and  $\sum\limits_{\gamma} \delta_{\sigma,\gamma} |C_\gamma| = 0$ because 
$$\sum\limits_{\gamma} \rho_{\sigma,\gamma} |C_\gamma| =  \sum\limits_{\gamma} w_\sigma(C_\gamma) = w_\sigma(B) = \sum\limits_{\gamma} \rho_\sigma |C_\gamma|. $$

Therefore,
$$\tilde{\varphi}(\mathcal{C}) =  \overline{\varphi}(B) + \sum\limits_{\sigma}\sum\limits_\gamma \delta^2_{\sigma,\gamma} |C_\gamma|  \geq  \overline{\varphi}(B) + \varepsilon^{d+2} |B| , $$
because, by the choice of block $C = C_{\gamma'}$ for some $\gamma'$, we have $|\delta_{\sigma, \gamma'}| \geq \varepsilon$ and $|C_{\gamma'}| = |C| \geq \varepsilon^d |B|$.
\end{proof}

Using this lemma, we prove that if $\mathcal{B}$ is not an $\varepsilon$-regular block partition then there is a refinement $\mathcal{C}$ for which $\tilde{\varphi} (\mathcal{C})$ is  substantively greater than $\tilde{\varphi}(\mathcal{B})$.

\begin{lemma} \label{goodrefine}
Let us fix $\varepsilon > 0$ and let $A$ be a $d$-dimensional matrix of order $n$ over a finite alphabet $\Sigma$. Assume that $\mathcal{B} = \{ B_\beta\} $ is a balanced $\varepsilon$-irregular  block partition of order $t$ of the matrix $A$  with the sum of volumes of all its exceptional blocks  equal to $V \leq \varepsilon |A|$. Then there exists a balanced refinement $\mathcal{C} = \{ C_\gamma\}$ of the block partition $\mathcal{B}$ such that  the cardinality  $|\mathcal{C}| \leq  8^{d^2t}  |\mathcal{B}| $, the sum of volumes all exceptional blocks is not greater than $ V + \frac{d}{2^{dt}} |A|$ and such that 
$$\tilde{\varphi}(\mathcal{C}) \geq \tilde{\varphi}(\mathcal{B}) + \varepsilon^{d+3}(1-\varepsilon)  \cdot |A| .$$
 \end{lemma}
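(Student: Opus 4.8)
The plan is to extract from the $\varepsilon$-irregularity of $\mathcal{B}$ a large family of irregular ordinary blocks, refine each of them by its own witness through Lemma~\ref{increasephi}, assemble all these local cuts into a single (generally unbalanced) grid refinement, and only then restore balance by passing to equal chunks and sweeping the leftovers into the exceptional set.

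First I would use the hypothesis $V \le \varepsilon |A|$. Since $\mathcal{B}$ is $\varepsilon$-irregular yet its exceptional volume is already within budget, the only remaining reason for irregularity is the count of irregular ordinary blocks; hence the set $\mathcal{I}$ of $\varepsilon$-irregular ordinary blocks satisfies $|\mathcal{I}| > \varepsilon |\mathcal{B}| = \varepsilon t^d$. For each $B_\beta \in \mathcal{I}$, Lemma~\ref{increasephi} provides a partition $\mathcal{C}_\beta$ of $B_\beta$ into $2^d$ ordinary sub-blocks with $\tilde{\varphi}(\mathcal{C}_\beta) \ge \overline{\varphi}(B_\beta) + \varepsilon^{d+2}|B_\beta|$; along each axis $i$ this partition is simply a bipartition, a \emph{cut}, of the ordinary part of $\mathcal{B}$ that $B_\beta$ occupies on axis $i$.

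Next I would build the common grid refinement $\mathcal{D}$ of $\mathcal{B}$: on each axis $i$, subdivide every ordinary part of $\mathcal{B}$ by the common refinement of all cuts imposed on it by the blocks of $\mathcal{I}$ sharing that part. By construction $\mathcal{D}$ refines $\mathcal{B}$ and, restricted to any $B_\beta \in \mathcal{I}$, refines $\mathcal{C}_\beta$. Applying Lemma~\ref{convmono} block by block, together with the observation that relocating a block to the exceptional set does not decrease $\tilde{\varphi}$, yields $\tilde{\varphi}(\mathcal{D}) \ge \tilde{\varphi}(\mathcal{B}) + \sum_{\beta \in \mathcal{I}} \varepsilon^{d+2}|B_\beta|$. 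As each $B_\beta$ has volume $m^d$ and $(tm)^d \ge |A| - V \ge (1-\varepsilon)|A|$, the sum is at least $\varepsilon^{d+2} m^d \cdot \varepsilon t^d = \varepsilon^{d+3}(tm)^d \ge \varepsilon^{d+3}(1-\varepsilon)|A|$, exactly the required increment; crucially, this bound is inherited under any further refinement or exceptionalization.

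Finally, $\mathcal{D}$ need not be balanced, so I would fix a common block order $m'$, cut every ordinary part of $\mathcal{D}$ into chunks of length $m'$, and move into the exceptional set both the $(<m')$ remainders and the few chunks needed to equalize the axes; this operation only refines and exceptionalizes, so by Lemma~\ref{convmono} and the same remark the $\tilde{\varphi}$-increment survives. It then remains to calibrate $m'$ and tally: counting the ordinary parts created on each axis gives the cardinality estimate $|\mathcal{C}| \le 8^{d^2 t}|\mathcal{B}|$, while bounding the total swept length per axis by a suitable multiple of $m'$ and using $(tm)^d - (t'm')^d \le d\,(tm - t'm')\, n^{d-1}$ for the new order $t'$ gives the exceptional-volume bound $V + \frac{d}{2^{dt}}|A|$. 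I expect this last step to be the main obstacle: the common refinement and the rebalancing pull the cardinality in opposite directions, since finer chunks shrink the discarded volume but inflate the number of blocks, so the quantitative heart of the lemma is choosing $m'$ so that the cardinality bound and the exceptional-volume bound hold at once.
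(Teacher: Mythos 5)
Your proposal is correct and follows the paper's proof step for step: the same extraction of at least $\varepsilon|\mathcal{B}|$ irregular ordinary blocks from the hypothesis $V\leq\varepsilon|A|$, the same block-by-block application of Lemma~\ref{increasephi}, the same common grid refinement $\mathcal{D}$ carrying the increment $\varepsilon^{d+2}m^d\cdot\varepsilon t^d=\varepsilon^{d+3}(tm)^d\geq\varepsilon^{d+3}(1-\varepsilon)|A|$, and the same rebalancing by chopping into equal chunks and sweeping the leftovers into the exceptional set. The only step you leave open, the calibration of the chunk order $m'$, is not the obstacle you fear: the paper simply takes $l=\frac{n}{t\,4^{dt}}$, and both bounds then follow from the single observation that $tm\leq n$. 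Indeed, each ordinary block of $\mathcal{D}$ lies inside a block of order $m$, so it contains at most $\frac{m^d}{l^d}=\left(\frac{tm}{n}\right)^d 4^{d^2t}\leq 4^{d^2t}$ chunks, whence $|\mathcal{C}|\leq 4^{d^2t}\cdot 2^{dt}|\mathcal{B}|\leq 8^{d^2t}|\mathcal{B}|$; and the swept volume per ordinary block of $\mathcal{D}$ is at most $d\,l\,m^{d-1}$, so in total at most $d\,l\,m^{d-1}\cdot 2^{dt}t^d=\frac{d}{2^{dt}}\,n(tm)^{d-1}\leq\frac{d}{2^{dt}}|A|$. Thus any $m'$ in the (nonempty, again because $tm\leq n$) window between roughly $m/4^{dt}$ and $n/(t\,4^{dt})$ satisfies the cardinality and volume constraints simultaneously, so the ``quantitative heart'' is a two-line computation rather than a genuine tension.
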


 \begin{proof}
Because the block partition $\mathcal{B}$ is $\varepsilon$-irregular there are at least $\varepsilon |\mathcal{B}|$ ordinary blocks $B_\beta$ of order $m$ that are $\varepsilon$-irregular.   By Lemma~\ref{increasephi}, for each such $\beta$  there exists a refinement $\mathcal{D}_\beta$ of the block  $B_\beta$ into $2^d$ ordinary blocks such that $\tilde{\varphi}(\mathcal{D}_\beta) \geq \overline{\varphi} (B_\beta) + \varepsilon^{d+2} m^d.$
 
Denote by  $\mathcal{D}$  an auxiliary block partition of the matrix $A$ that is a refinement of the block partition $\mathcal{B}$ into a minimal number blocks refining all blocks of partitions $\mathcal{D}_\beta$.  Recall that the class (ordinary or exceptional) of blocks of $\mathcal{D}$ is defined by embracing blocks.
 
Since each ordinary block of $\mathcal{B}$ is divided into at most  $2^{dt}$ ordinary blocks in the refinement $\mathcal{D}$, the cardinality $|\mathcal{D}|$ is not greater than $2^{dt} \cdot |\mathcal{B}|$. Also note that the block partition $\mathcal{D}$ has the same sum of volumes of exceptional blocks as $\mathcal{B}$.
 
 Let us estimate an increment of the function $\tilde{\varphi}(\mathcal{D})$ with respect to $\tilde{\varphi}(\mathcal{B})$. By Lemma~\ref{convmono}, refinements of blocks  do not decrease the function $\overline{\varphi}$. Because there are at least $\varepsilon |\mathcal{B}|$ blocks $B_\beta$ of order $m$ for which $\tilde{\varphi}(\mathcal{D}_\beta) \geq \overline{\varphi} (B_\beta) + \varepsilon^{d+2} m^d$, we have that
 $$\tilde{\varphi}(\mathcal{D}) \geq \tilde{\varphi}(\mathcal{B}) + \varepsilon^{d+3} \cdot |\mathcal{B}| m^d =  \tilde{\varphi}(\mathcal{B}) + \varepsilon^{d+3}  \cdot (tm)^d \geq \tilde{\varphi}(\mathcal{B}) + \varepsilon^{d+3}(1 - \varepsilon) \cdot  |A| .$$ 
 The last inequality holds because $(tm)^d$ is the sum of volumes of all ordinary blocks and the sum of volumes of all exceptional blocks is not greater than $ \varepsilon |A|$.
 
 Let $\mathcal{C} = \{ C_\gamma\}$ be a balanced block partition of $A$ with ordinary blocks of order $l = \frac{n}{t 4^{dt}}$  such that $\mathcal{C}$ is  the minimal (on a number of blocks) refinement of the block partition $\mathcal{D}$, with each ordinary block of $\mathcal{D}$ containing as much as possible ordinary $l$-ordered blocks of the partition $\mathcal{C}$.   All other blocks of the partition $\mathcal{C}$ are set to be exceptional.
 
 Conditions on the block partition $\mathcal{C}$ imply that  the sum of volumes of all exceptional blocks $C_\gamma$ within each ordinary block of $\mathcal{D}$ is not greater than $d l m^{d-1}$, so the sum of volumes of all exceptional blocks of the partition $\mathcal{C}$ does not exceed
 $$V + d l m^{d-1}\cdot  |\mathcal{D}| \leq   V +  d \cdot   \frac{n}{t 4^{dt}}  \cdot  m^{d-1} \cdot  2^{dt}   t^d  \leq V +  \frac{d}{2^{dt}} \cdot n (tm)^{d-1}  \leq V + \frac{d}{2^{dt}} \cdot  |A|. $$
 
 On the other hand, the cardinality of the block partition $\mathcal{C}$ is 
 $$|\mathcal{C}| \leq  \frac{m^d}{l^d}  |\mathcal{D}|  = \frac{(tm)^d}{n^d} 4^{d^2t} \cdot 2^{dt} |\mathcal{B}|   \leq 8^{d^2t}  |\mathcal{B}|. $$ 
 
It only remains to estimate the function $\tilde{\varphi}(\mathcal{C})$. Using Lemma~\ref{convmono}, we obtain
$$\tilde{\varphi}(\mathcal{C}) \geq \tilde{\varphi} (\mathcal{D}) \geq\tilde{\varphi}(\mathcal{B}) + \varepsilon^{d+3}(1-\varepsilon) \cdot |A|. $$
 \end{proof}

Now we are ready to prove the main result of this section.

\begin{proof}[Proof of Theorem~$\ref{semmulti}$]
Let $\mathcal{D}$ be an arbitrary balanced block partition of order $t$ of the matrix $A$ with the sum of volumes of all exceptional blocks equal to $V$, where  $0 < V < ( \varepsilon - 2 \cdot \frac{d}{2^{dt}} )|A| $. Let us iteratively apply Lemma~\ref{goodrefine} to the partition $\mathcal{D}$ until we get an $\varepsilon$-regular block partition. The inequality on $V$ ensures that the applications of Lemma~\ref{goodrefine}  do not make   the sum of volumes of exceptional blocks greater than $\varepsilon |A|$. 

After each iteration the function $\tilde{\varphi} $  increases by at least $\varepsilon^{d+3}(1-\varepsilon) \cdot |A|$. By Lemma~\ref{boundonphi}, for any block partition  the function $\tilde{\varphi}$  does not exceed $|A|$. Thus,  after at most $\varepsilon^{-d-3}(1 - \varepsilon)^{-1}$ applications of Lemma~\ref{goodrefine} we  get an $\varepsilon$-regular block partition $\mathcal{B}$. Lemma~\ref{goodrefine} also implies that the cardinality of the obtained block partition $\mathcal{B}$ can be bounded by some constant depending only on $\varepsilon$.

Therefore, if the matrix $A$ has a large enough order then we can always find its $\varepsilon$-regular block partition whose cardinality bounded by $\varepsilon$.

\end{proof}

\section{Counting lemma for $2$-dimensional matrices} \label{seccount2}

In this section we prove a generalization of the counting lemma (Lemma~\ref{countgr}) for $2$-dimensional matrices over a finite alphabet.  

Given $2$-dimensional matrices $A$ and $C$ over $\Sigma$, we will say that $A$ contains the matrix $C$ if after appropriate permutations of rows and columns of $A$ one can find a copy of the matrix $C$ in the resulting matrix. In other words, we will say that $A$ contains the matrix $C$ if there is at least one submatrix of $A$ obtained from $C$ by row and column permutations. 
In what follows,  the number of matrices $C$ in the matrix $A$ means the number of all distinct appearances of row and column permutations of $C$ in the matrix $A$.

The following theorem states that having a partition of a $2$-dimensional matrix $A$ into $\varepsilon$-regular blocks we can find many given submatrices $C$ in the matrix $A$.

\begin{teorema} \label{count2dim}
For any $\delta > 0$, $t,s \in \mathbb{N}$ and $0 < \rho_{i,j} (\sigma) \leq 1$  there is $\varepsilon > 0$ such that the following holds. If  $\mathcal{B} = \{ B_{i,j} \}$ is a  block partition  of a $2$-dimensional matrix $A$ over $\Sigma$ into blocks of sizes $m_{i} \times n_j$, $i = 1, \ldots, t;$ $j = 1, \ldots, s$ such that all  $B_{i,j}$  are $\varepsilon$-regular blocks having density of a symbol $\sigma$ equal to $\rho_{i,j}(\sigma)$ then the number of submatrices $C$ of sizes $t \times s$ in the matrix $A$ is at least 
$$ \left( 1 - \delta \right) \prod\limits_{i=1}^t m_i \prod\limits_{j=1}^{s} n_j  \rho_{i,j}(c_{i,j}).$$
\end{teorema}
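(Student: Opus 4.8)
The goal is to count submatrices $C = (c_{i,j})$ of size $t \times s$ inside $A$, where the $(i,j)$-entry of the copy is drawn from the block $B_{i,j}$. The plan is to build a copy of $C$ greedily, one row-index and one column-index at a time, and at each step control the number of available choices using $\varepsilon$-regularity. Concretely, a copy of $C$ is determined by a choice of $t$ row indices $r_1, \ldots, r_t$ (with $r_i$ lying in the $i$-th block row) and $s$ column indices $c_1, \ldots, c_s$ (with $c_j$ in the $j$-th block column), subject to the constraint that the entry of $A$ at position $(r_i, c_j)$ equals $c_{i,j}$ for every $i,j$. I would fix a small real parameter $\eta$ (polynomial in $\delta$, $t$, $s$, and the $\rho_{i,j}(\sigma)$) and choose $\varepsilon$ much smaller than $\eta$; the exceptional rows and columns discarded along the way will contribute only a $(1-\delta)$ factor loss.

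\medskip
\noindent\textbf{Key steps.}
First I would select the columns $c_1, \ldots, c_s$ one at a time. The subtlety is that after choosing $c_1, \ldots, c_{j-1}$, I must restrict attention to rows whose prescribed symbols in those columns are already consistent; this produces, inside each block row, a subset of admissible rows. As long as this subset has size at least $\varepsilon m_i$, the $\varepsilon$-regularity of each block $B_{i,j}$ guarantees that the density of the symbol $c_{i,j}$ among the admissible rows of $B_{i,j}$ in column-slab $j$ is within $\varepsilon$ of $\rho_{i,j}(c_{i,j})$. I would run a weighted greedy selection: call a column index $c_j$ \emph{good} if, simultaneously for all $i$, the fraction of admissible rows in block row $i$ that carry the symbol $c_{i,j}$ in column $c_j$ is at least $(1-\eta)\rho_{i,j}(c_{i,j})$. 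By $\varepsilon$-regularity applied to the whole column slab, all but an $\varepsilon$-fraction of the column indices $c_j$ in block column $j$ are good, so the vast majority of column choices keep the densities on track. Each good column thus multiplies the count by roughly $n_j \prod_i (1-\eta)\rho_{i,j}(c_{i,j})$, and after all $s$ columns are fixed the surviving admissible rows in block row $i$ still number at least $\prod_j (1-\eta)\rho_{i,j}(c_{i,j}) \cdot m_i$, from which the product formula assembles by multiplying over $i$ and $j$. Finally I would collect the accumulated $(1-\eta)$ factors, of which there are $O(ts)$, into a single $(1-\delta)$ factor by taking $\eta$ small enough.

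\medskip
\noindent\textbf{The main obstacle.}
The hard part is managing the interaction between the greedy conditioning and the lower-bound guarantee of $\varepsilon$-regularity: regularity is a statement about \emph{every} sufficiently large block, but as I condition on earlier column choices the admissible row sets shrink, and I must ensure they never drop below the $\varepsilon m_i$ threshold where regularity ceases to apply. This forces a careful bookkeeping argument showing that each surviving row set stays at least $\bigl(\prod_{j' < j}(1-\eta)\rho_{i,j'}(\cdot)\bigr) m_i \gg \varepsilon m_i$, which holds precisely because each $\rho_{i,j}(\sigma) > 0$ is bounded below (a hypothesis of the theorem) and $\varepsilon$ is chosen after all the densities. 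A second, more technical point is that the count I am building is a lower bound only: I discard the $\varepsilon$-fraction of \emph{bad} columns and use only the one-sided density estimate $\rho_\sigma(B) \geq \rho_{i,j}(c_{i,j}) - \varepsilon$, so I never need a matching upper bound, which keeps the argument one-directional and avoids the pitfalls that make the multidimensional counting lemma fail (as the later sections of the paper show).
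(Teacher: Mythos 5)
Your proposal is correct and takes essentially the same route as the paper's own proof: the paper also builds copies of $C$ column by column, maintaining in each block row the set of surviving rows (its blocks $\mathcal{V}_k$ of v-lines are exactly your admissible row sets), and its Lemma~\ref{regularline} is precisely your claim that all but an $O(\varepsilon)$-fraction of column choices are good. Your explicit bookkeeping that the admissible sets stay above $\varepsilon m_i$ because the densities $\rho_{i,j}(\sigma)$ are bounded below and $\varepsilon$ is chosen last is the same observation the paper uses (implicitly) when it iterates Lemma~\ref{regularline}.
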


For the sake of convenience, let us define two special types of blocks. 
Given a $2$-dimensional matrix $A$, let a \textit{v-line of length $l$} (vertical line)  to be a block of sizes $l \times 1$, and an \textit{h-line of length $l$} (horizontal line) to be a block of sizes $1 \times l$. V-lines of the maximal length are exactly \textit{columns} of the matrix $A$ and h-lines of the maximal length are  \textit{rows} of $A$.
 
 We start the proof of Theorem~\ref{count2dim} with some auxiliary lemma on properties of $\varepsilon$-regular matrices.

\begin{lemma} \label{regularline}
Suppose that $B$ is an $\varepsilon$-regular matrix of sizes $m \times n$ over $\Sigma$ and of density $\rho_\sigma$ of symbols $\sigma$. Let $U$ be a set of  v-lines  (or h-lines) of $B$ of length $l \geq \varepsilon m$ (or of length $l \geq \varepsilon n$) composing a block of $B$.  Then the number  of v-lines (h-lines) of $U$ with the density of a symbol $\sigma$  strictly less than $\rho_\sigma - \varepsilon$ is less than $\varepsilon n$ (less than $\varepsilon m$).
\end{lemma}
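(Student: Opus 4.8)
The plan is to argue by contradiction using a simple averaging observation together with the $\varepsilon$-regularity of $B$. I treat the v-line case; the h-line case will follow verbatim after exchanging the roles of rows and columns. Suppose, contrary to the claim, that at least $\varepsilon n$ of the v-lines in $U$ have density of $\sigma$ strictly less than $\rho_\sigma - \varepsilon$.

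First I would collect these ``deficient'' v-lines into a single block. Since $U$ is itself a block, all of its v-lines share the same set of $l$ rows and differ only in their column index; hence any subset of them again forms a block $B'$ of $B$, of sizes $l \times k'$, where $k'$ denotes the number of deficient v-lines. By the assumption made for contradiction $k' \geq \varepsilon n$, and by the hypothesis of the lemma $l \geq \varepsilon m$, so $B'$ satisfies both size conditions ($m^1 \geq \varepsilon m$ and $m^2 \geq \varepsilon n$) required in the definition of an $\varepsilon$-regular matrix.

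Next I would apply the $\varepsilon$-regularity of $B$ to the block $B'$, which yields $\rho_\sigma(B') \geq \rho_\sigma - \varepsilon$. On the other hand, because all v-lines of $B'$ have the same length $l$, the volume of $B'$ is $l k'$ and its weight decomposes as a sum over lines, so the block density is exactly the arithmetic mean of the line densities: $\rho_\sigma(B') = \frac{1}{k'}\sum_{c} \rho_\sigma(c)$, where $c$ ranges over the deficient v-lines. Each summand is strictly below $\rho_\sigma - \varepsilon$, hence so is their average, contradicting the lower bound just obtained. This contradiction forces the number of deficient v-lines to be less than $\varepsilon n$.

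I do not anticipate any serious obstacle. The only points that need care are the bookkeeping that guarantees the deficient v-lines genuinely assemble into an admissible block (same $l$ rows, with $l \geq \varepsilon m$) so that $\varepsilon$-regularity is applicable, and the fact that the block density collapses to a clean \emph{average} of line densities precisely because all the lines have equal length $l$ (otherwise one would get a weighted mean, though the conclusion would still hold). The symmetric h-line statement, with the bound $\varepsilon m$ in place of $\varepsilon n$, is obtained by the identical argument applied to horizontal lines.
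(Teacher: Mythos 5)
Your proposal is correct and follows essentially the same argument as the paper: collect the deficient v-lines into a block whose sizes are at least $\varepsilon m$ and $\varepsilon n$, note that its density is a (strict) average of the line densities and hence strictly below $\rho_\sigma - \varepsilon$, and contradict the lower bound $\rho_\sigma(B') \geq \rho_\sigma - \varepsilon$ given by $\varepsilon$-regularity. Your added care about the lines sharing the same $l$ rows and about the density being an unweighted mean is a sound (if minor) elaboration of what the paper leaves implicit.
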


\begin{proof}
Assume that there are at least $\varepsilon n$ v-lines of $U$ with the density of $\sigma$ less $\rho_\sigma - \varepsilon$ and let $C$ be a block formed by all such v-lines. Note that  the density $\rho_\sigma(C)$ of symbol $\sigma$ in $C$ is also less than $\rho_\sigma - \varepsilon$. On the other hand, all sizes of the block $C$ are at least $\varepsilon$-fractions of sizes of the matrix $B$. The definition of $\varepsilon$-regularity implies that $|\rho_\sigma(C) - \rho_\sigma| \leq \varepsilon$, so $\rho_\sigma(C) \geq \rho_\sigma - \varepsilon$: a contradiction.

The proof for the case of h-lines is similar. 
\end{proof}

Using this lemma, we prove Theorem~\ref{count2dim}.

\begin{proof}[Proof of Theorem~$\ref{count2dim}$]
The main idea of the proof is to construct blocks $C$ in the matrix $A$ by taking entries $c_{i,j}$ from  blocks $B_{i,j}$.  For this purpose, we use induction on the number $s$ of columns in a block partition of $A$.

By Lemma~\ref{regularline}, for each block $B_{i,1}$ the number of columns with density of symbol $c_{i,1}$ at least $\rho_{i,1}(c_{i,1}) - \varepsilon$ is not less than $(1 - \varepsilon)n_1$. So we can choose a block $\mathcal{V}_1$ composed by  $V_{1,1}, \ldots, V_{t,1}$, where  each $V_{i,1}$ is a v-line of $B_{i,1}$ filled by  symbols $c_{i,1}$ and the length of $V_{i,1}$ is not less than $(\rho_{i,1}(c_{i,1}) - \varepsilon) m_i$. Note that the number of columns of the matrix $A$ in which we can find such a block $\mathcal{V}_1$ is at least $(1 - t\varepsilon ) n_1.$  Therefore, the number of appearances of the first column of $C$ within the first column of the partition $\mathcal{B}$ is at least  $(1 -t \varepsilon) n_1  \prod\limits_{i=1}^t (\rho_{i,1}(c_{i,1}) - \varepsilon) m_i. $

Suppose that we have already constructed at least  $(1 - t \varepsilon)^{k-1}  n_1  \cdots n_{k-1}$ different blocks $\mathcal{V}_{k-1}$ formed by v-lines  $V_{i,j}$ of blocks $B_{i,j}$ filled by symbols $c_{i,j}$. We also assume that the length of each v-line $V_{i,j}$ is at least $ m_i \prod\limits_{j=1}^{k-1}\left( \rho_{i,j}(c_{i,j}) - \varepsilon\right)$. Consider v-lines in blocks $B_{i,k}$ that share exactly the same rows in the matrix $A$ as columns of the block $\mathcal{V}_{k-1}$. By Lemma~\ref{regularline}, for each block $B_{i,k}$ the number of such v-lines with density of symbol $c_{i,k}$ at least $\rho_{i,k}(c_{i,k}) - \varepsilon$ is not less than $(1 - \varepsilon)n_{k}$.   So with the help of these lines we can extend a block $\mathcal{V}_{k-1}$ to the block $\mathcal{V}_{k}$ satisfying the similar conditions. By conditions, the length of columns in the block $\mathcal{V}_k$  is not less than $ m_i \prod\limits_{j=1}^k (\rho_{i,j}(c_{i,j}) - \varepsilon)$.

Note that a given block $\mathcal{V}_{k-1}$ has  at least $(1 - t\varepsilon ) n_{k}$ expansions to the block $\mathcal{V}_k$.   Consequently, we construct at least $(1 - t \varepsilon)^{k} n_1  \cdots n_{k}$ blocks $\mathcal{V}_{k}$.  Moreover, the number of  appearances of the first $k$ columns of $C$ in the block $\mathcal{V}_k$ containing  is at least $(1 -t \varepsilon)^{k} \prod\limits_{i=1}^t m_i \prod\limits_{j=1}^{k} n_j (\rho_{i,j}(c_{i,j}) - \varepsilon). $
 
 Iterating this process until $k = s$, we obtain that the number of blocks $C$ in the matrix $A$ is at least 
 $$ (1 - t \varepsilon )^s  \prod\limits_{i=1}^t m_i \prod\limits_{j=1}^{s} n_j  (\rho_{i,j}(c_{i,j}) - \varepsilon).$$
It remains to note that if $\varepsilon = \varepsilon(\delta, t, s, \rho_{i,j}(c_{i,j}))$ is small enough then the number of blocks $C$ in  the matrix $A$ is greater than
 $$\left( 1 - \delta \right)  \prod\limits_{i=1}^t m_i \prod\limits_{j=1}^{s} n_j \rho_{i,j}(c_{i,j}).$$
 \end{proof}

In the statement of Theorem~\ref{count2dim} we can permute rows and columns of the block partition $\mathcal{B}$ (or rows and columns of the block $C$) and get many lower bounds on the number of appearances of $C$. Moreover, to obtain a better bound we can sum these bounds over all different placements of the matrix $C$.

\section{Counting lemma for multidimensional matrices} \label{seccountmulti}

We start this section with an example showing that the statement of the $2$-dimensional counting lemma fails in a multidimensional case: for every $\varepsilon > 0$ there exist multidimensional binary matrices of arbitrary large order formed by $\varepsilon$-regular blocks of nonzero density such that they do not contain some given submatrix.

\begin{utv} \label{contrex}
For each $d \geq 3$ and every $0 < \varepsilon <1$ there is $N = N(\varepsilon)$ such that for all $n \geq N$ there exist a $d$-dimensional binary matrix $A$ of order $n$, a balanced block partition $\mathcal{B} = \{ B_\beta\}_{\beta \in J}$ and a binary matrix $U = \{u_\beta\}$ satisfying the following property. Each block $B_\beta$ is $\varepsilon$-regular and has density $\rho = 1/2$ but there are no submatrices $U$ in the matrix $A$.
\end{utv}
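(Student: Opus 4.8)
The plan is to defeat counting by an explicitly algebraic matrix whose entries depend only on the coordinate sum over $\mathbb F_2^{k}$, exploiting a tension that is invisible to $\varepsilon$-regularity: a quadratic Boolean function can be spectrally flat (hence random-looking on every large box), yet its $d$-th difference collapses once $d\ge 3$. Identify $\{1,\dots,n\}$ with $\mathbb F_2^{k}$, where $n=2^{k}$, fix a bent function $B$ on $\mathbb F_2^{k-1}$ (so $k-1$ is even, e.g. $B=s_1 s_2\oplus\cdots\oplus s_{k-2}s_{k-1}$), put $g(s)=B(s_1,\dots,s_{k-1})\oplus s_k$, and define the $d$-dimensional binary matrix $A$ by $a_{x^1,\dots,x^d}=g\!\left(x^1\oplus\cdots\oplus x^d\right)$, where $\oplus$ is addition in $\mathbb F_2^{k}$. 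For the partition $\mathcal B$ I split each of the $d$ directions into the two halves $\{x^i_1=0\}$ and $\{x^i_1=1\}$; this gives $2^{d}$ ordinary blocks of order $n/2$ and no exceptional blocks, so $J=\{1,2\}^{d}$ and $U=\{u_\beta\}_{\beta\in J}$ is a $2\times\cdots\times 2$ array, which I take to be any fixed pattern of odd weight (say a single $1$ and $2^{d}-1$ zeros).

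The crux is the obstruction to containing $U$. Every $2\times\cdots\times 2$ subarray of $A$ is described by a base point $w$ and nonzero differences $\alpha^1,\dots,\alpha^d\in\mathbb F_2^{k}$, and its entries are $g\!\left(w\oplus\bigoplus_{i}\epsilon_i\alpha^i\right)$ for $\epsilon\in\{0,1\}^{d}$. Since $g$ has algebraic degree at most $2$ and $d\ge 3>2$, the $d$-th additive derivative of $g$ vanishes identically, i.e.
\[
\sum_{\epsilon\in\{0,1\}^{d}} g\!\left(w\oplus\bigoplus_{i=1}^{d}\epsilon_i\alpha^i\right)\equiv 0 \pmod 2 .
\]
Hence every $2\times\cdots\times2$ subarray of $A$ contains an even number of $1$'s, and since permuting axes preserves this count, $A$ contains no copy of the odd-weight pattern $U$. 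This is exactly where $d\ge 3$ enters: for $d=2$ the second difference of a quadratic need not vanish, matching the fact that the counting lemma does hold for $2$-dimensional matrices (Theorem~\ref{count2dim}).

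It remains to supply the regularity data. Writing $\hat g$ for the Fourier transform of $g$ over $\mathbb F_2^{k}$, one has $\hat g(0)=\tfrac12$ (so $A$ has density $1/2$) and $\max_{a\ne 0}|\hat g(a)|=O(2^{-k/2})$, because $B$ is bent and the extra coordinate $s_k$ balances $g$. A standard Parseval estimate then gives, for any box with all sides at least $\varepsilon$ times the ambient order, density $\tfrac12+O\!\big(\varepsilon^{-1}\max_{a\ne0}|\hat g(a)|\big)=\tfrac12+o(1)$ as $k\to\infty$; applied inside each block this makes every block $\varepsilon$-regular once $k$ is large. To force density \emph{exactly} $1/2$ on each block I use that the split is transversal to the balancing coordinate: inside a block the $x^i_1$ are frozen but $s_k=x^1_k\oplus\cdots\oplus x^d_k$ still takes both values equally often, and averaging $g=B\oplus s_k$ over $s_k$ yields density exactly $1/2$. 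Choosing $N$ (equivalently $k$) large enough that the error term drops below $\varepsilon$ settles the case $n=2^{k}$.

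Finally, to reach every $n\ge N$ rather than only powers of two, I replace the uniform index set by a balanced blow-up: choose $k$ with $2^{k}\le n<2^{k+1}$ and represent each of the $n$ indices (in each direction) by a value of $\mathbb F_2^{k}$ used once or twice, pairing values so the $s_k$-balancing is preserved. The parity obstruction is untouched, since a repeated index only duplicates slices and keeps subarray weights even; and the Parseval estimate, rerun with the multiplicity measure $\mu(v)\in\{1,2\}$ using $\sum_v\mu(v)^2\le 2\sum_v\mu(v)$, still yields density $\tfrac12+o(1)$ on all large boxes. I expect the main obstacle to be precisely this reconciliation of \emph{exact} block density $1/2$ with the bent-type spectral flatness required for $\varepsilon$-regularity — genuinely bent functions are never perfectly balanced — which is why the construction splits $\mathcal B$ transversally to the auxiliary linear coordinate $s_k$.
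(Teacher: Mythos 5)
Your construction is correct in substance, but it takes a genuinely different route from the paper's. The paper argues cylindrically: it takes a $2$-dimensional $\varepsilon$-regular matrix $H$ of density $1/2$ (a slightly modified Hadamard matrix) and fills each of the eight blocks of a $3$-dimensional matrix with $h_{i,j}$ or $1-h_{i,j}$ according to the parity of the block index, so that entries are constant along the third direction; the forbidden $U$ has $u_{1,1,0}\ne u_{1,1,1}$ and $u_{0,0,0}=u_{0,0,1}=1$, so a copy would either need variation along the third direction inside one layer of blocks (impossible by construction) or force some entry $h_{i,j}$ to equal $0$ and $1$ simultaneously. Your matrix $a_{x^1,\ldots,x^d}=g\left(x^1\oplus\cdots\oplus x^d\right)$ with $g$ quadratic replaces this local contradiction by a global parity invariant: $d$-th discrete derivatives of degree-$2$ functions vanish once $d\ge 3$, so every $2\times\cdots\times 2$ subarray of $A$ has even weight, and every odd-weight $U$ is excluded at once, uniformly in $d$ and under all hyperplane (indeed also direction) permutations; this also pinpoints why $d=2$ is the exact threshold, in agreement with Theorem~\ref{count2dim}. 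On the regularity side the two proofs lean on the same spectral mechanism --- your Parseval/Cauchy--Schwarz estimate via bentness is precisely what underlies the Hadamard-block fact of Alon that the paper cites --- and your $s_k$-trick for exact block density $1/2$ is needed exactly because bent functions are never balanced; the paper avoids this by inheriting exact density from $H$. Two loose ends remain in your write-up, both comparable in rigor to the paper's own appeal to ``little modifications of the Hadamard matrices'' (which likewise do not exist for all orders): (i) bentness on $\mathbb{F}_2^{k-1}$ forces $k$ odd, so your blow-up with multiplicities in $\{1,2\}$ covers only $2^k\le n<2^{k+1}$ and leaves factor-$4$ gaps between usable powers of two; either allow multiplicities up to $4$ (the Parseval step still works, with a worse constant) or, for even $k$, use a balanced flat-spectrum quadratic such as $s_1s_2\oplus\cdots\oplus s_{k-3}s_{k-2}\oplus s_{k-1}$; (ii) for odd $n$ no balanced partition into $2^d$ equal blocks can cover the whole matrix, so exceptional entries are unavoidable, and the flip-invariant pairing of the doubled values that keeps every ordinary block exactly $1/2$-dense is asserted rather than carried out. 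Neither issue touches the core of your argument.
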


\begin{proof}
We prove the proposition for $3$-dimensional matrices $A$ and a block partition $\mathcal{B} = \{B_\beta \} $ containing $8$ blocks $B_\beta$, where index $\beta = (\beta_1, \beta_2, \beta_3)$, $\beta_i \in \{ 0,1\}$. Also we use some binary matrix $U$ of order $2$ having the following entries:
$$u_{0,0,0} = u_{0,0,1}= u_{1,1,0}=1; ~~~~ u_{1,1,1} = 0.$$
 One can easily extend this construction for greater dimensions and other block partitions.

Let $H = \{ h_{i,j}\}$ be an arbitrary $2$-dimensional $\varepsilon$-regular binary matrix of order $n$ and of density $\rho = 1/2$. Such a  matrix $H$ can be obtained, for example, via a little modifications of the Hadamard matrices and replacements of $-1$s by $0$s. The fact that the density of $1$s (or $-1$s) in each large enough block of a Hadamard matrix is close to $1/2$ was firstly proved in~\cite{alon}.

Let us define entries of blocks $B_\beta$ by the following way: if the sum $\beta_1 + \beta_2 + \beta_3$  is even then we put $b_{i,j,k} = h_{i,j}$, for all other blocks $B_\beta$ let  $b_{i,j,k} = 1- h_{i,j}$.

By the construction of blocks $B_\beta$ and by properties of the matrix $H$, for every block  $C$ in  $B_\beta$ with sizes $m_1 \times m_2 \times m_3$, $m_i \geq \varepsilon n$ it holds
$$|\rho (B_\beta) - \rho(C)| = |1 / 2 - \rho(C)| \leq  \varepsilon .$$
Therefore, all blocks $B_\beta$ are $\varepsilon$-regular.

It remains to prove that there are no blocks $U$   in the matrix $A$. The union of all blocks $B_{i,j,0}$ (or, respectively, the union of $B_{i,j,1}$) does not contain the matrix $U$ because   for all entries of such $B_\beta$ we have $b_{i,j,k} = b_{i,j,l}$, while $u_{1,1,0} \neq u_{1,1,1}$.   Without loss of generality, assume now that $U$ uses entries from blocks $B_{0,0,0}$ and $B_{0,0,1}$ and  let $b_{0,0,0} =h_{0,0} = 1$ be an entry from the block $B_{0,0,0}$ belonging to the submatrix $U$. Since $U$ should also contain an unity entry from the block $B_{0,0,1}$, there is some $k$ such that the entry $b'_{0,0,k} = 1 - h_{0,0}$ from the block $B_{0,0,1}$ is equal to 1.  So we have that $h_{0,0}$ is simultaneously equal to $0$ and $1$: a contradiction. 

At last, permutations of hyperplanes of the matrix $U$ also produce matrices that cannot be found within the matrix $A$.
\end{proof}

Proposition~\ref{contrex} implies that for obtaining a multidimensional analogue of the counting lemma we need a stronger regularity of block partitions. So we introduce the concept of $d$-dimensional $\varepsilon$-regular patterns. In some sense, they are close to $\langle \delta \rangle $-regular hypergraphs that were proposed recently to prove the tightness of hypergraph regularity lemmas in~\cite{tight}.
 
Let $0 < \varepsilon, \rho_\sigma <1$ and $n, d \in \mathbb{N}$. Our definition of $\varepsilon$-regular patterns is inductive on $d$.    A \textit{$1$-dimensional $\varepsilon$-regular pattern} of order $n$ is an arbitrary $1$-dimensional matrix of order $n$ over the alphabet $\Sigma$  and with densities of symbols $\sigma$ equal $ \rho_{\sigma}$.  Recall that matrices (and patterns) over the alphabet $\{ 0,1\}$ are called binary.
  
Define a \textit{$d$-dimensional $\varepsilon$-regular pattern} $A$ of order $n$ and densities $\rho_\sigma$ to be a $d$-dimensional matrix of order $n$ such that for each $(d-1)$-dimensional binary $\varepsilon$-regular  pattern of order $n$  and density $\rho' \geq \varepsilon$ its entrywise multiplication with at least $(1 - \varepsilon)n$ hyperplanes of $A$ of each direction is a $(d-1)$-dimensional $\varepsilon$-regular pattern  with densities of $\sigma$ at least $\rho_\sigma \rho'(1 - \varepsilon)$. Here a \textit{hyperplane} of a $d$-dimensional matrix is a maximal $(d-1)$-dimensional block in this matrix, and under a \textit{direction} of a hyperplane we mean an index position fixed in all its entries. 
  
To show that the set of multidimensional $\varepsilon$-regular patterns is nonempty, let us prove that a random matrix is likely to be an $\varepsilon$-regular pattern.

 \begin{utv}
 Let $A$ be a random $d$-dimensional matrix of order $n$ over an alphabet $\Sigma$, where entries  $a_\alpha$ are random variables taking value $\sigma$ with probability $\rho_\sigma$. Then for every given $\varepsilon >0$ the matrix $A$ is a $d$-dimensional $\varepsilon$-regular pattern with high probability as $n \rightarrow \infty$.
 \end{utv}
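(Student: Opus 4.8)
The plan is to prove, by induction on the dimension $k$, a quantitatively strengthened statement from which the proposition follows. Writing $P \odot H$ for the entrywise product (the matrix $H$ masked by the support of the binary pattern $P$), set $\rho_0 := (\varepsilon(1-\varepsilon))^{d}$, a positive constant depending only on $\varepsilon$ and $d$. The claim is that there are constants $c_k = c_k(\varepsilon,d,|\Sigma|)>0$ such that for every binary $k$-dimensional $\varepsilon$-regular pattern $P$ of order $n$ and density $\rho' \ge \rho_0$, if $H$ is a random $k$-dimensional matrix with independent entries taking value $\sigma$ with probability $\rho_\sigma$, then $P\odot H$ is a $k$-dimensional $\varepsilon$-regular pattern with density of each $\sigma$ at least $\rho_\sigma\rho'(1-\varepsilon)$, except with probability at most $2^{-c_k n^{k}}$. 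The proposition is the case $k=d$, $P\equiv 1$, $\rho'=1$. I carry the mask $P$ because applying the inductive definition of a pattern to $M:=P\odot H$ immediately produces products $P''\odot(P_i\odot H_i)=(P''\odot P_i)\odot H_i$, where $P''$ is a test pattern and $P_i,H_i$ are hyperplanes of $P,H$; thus the recurring object is always ``a fixed binary pattern applied to independent random entries'', never a fully random matrix.

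The base case $k=1$ is essentially free: every $1$-dimensional matrix is a $1$-dimensional pattern, so only the density bound must be verified, and this is a Chernoff estimate over the at least $\rho_0 n$ independent entries in the support of $P$, with failure probability $2^{-c_1 n}$ after a harmless union over the finitely many $\sigma\in\Sigma$.

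For the inductive step I would fix a direction and a $(k-1)$-dimensional binary test pattern $P''$ of density at least $\varepsilon$, and express the hyperplanes of $M$ in that direction as $P_i\odot H_i$ with the $H_i$ independent random $(k-1)$-dimensional matrices. Because $P$ is itself a pattern, all but at most $\varepsilon n$ of the masks $R_i:=P''\odot P_i$ are $(k-1)$-dimensional $\varepsilon$-regular patterns whose density still exceeds $\rho_0$ (densities shrink by a factor at least $\varepsilon(1-\varepsilon)$ per level, but the recursion has depth at most $d$, so they never drop below $\rho_0$). For each such good index the inductive hypothesis applies to $R_i\odot H_i$, which is therefore a good $(k-1)$-dimensional pattern of the required density except with probability at most $2^{-c_{k-1}n^{k-1}}$. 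Since the $H_i$ are independent, the probability that more than an extra fraction $\eta n$ of the good indices fail is at most $\binom{n}{\eta n}\,2^{-c_{k-1}\eta n^{k}}\le 2^{-\frac{1}{2} c_{k-1}\eta n^{k}}$; a union bound over the at most $2^{n^{k-1}}$ test patterns $P''$ and the $d$ directions then still tends to $0$, and in fact is at most $2^{-c_k n^{k}}$, closing the induction.

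The main obstacle is exactly this union bound: one must range over all $2^{n^{k-1}}$ candidate test patterns, so per-hyperplane concentration that is only exponentially small in the hyperplane volume $n^{k-1}$ is not visibly sufficient. The argument is rescued by the presence of $\Theta(n)$ mutually independent hyperplanes, which upgrades the ``many bad hyperplanes'' event to probability exponentially small in the full volume $n^{k}$, supplying the extra factor of $n$ in the exponent needed to beat the union bound. The remaining delicacy is bookkeeping of the exceptional fractions and of the multiplicative $(1-\varepsilon)$ losses: the at most $\varepsilon n$ hyperplanes already spoiled by $P$ must be combined with the $\eta n$ spoiled by random fluctuation without overshooting the allowed $\varepsilon n$, and the $(1-\varepsilon)$ factors in the density targets compound with depth. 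Both are controlled by running the induction with a parameter hierarchy $\eta\ll\varepsilon$ and by observing that, the recursion having bounded depth $d$, all relevant densities remain above $\rho_0$ and all compounded losses remain bounded below by the fixed constant $(1-\varepsilon)^{d}$, so every Chernoff and union estimate stays exponentially small in the ambient volume.
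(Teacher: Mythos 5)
Your overall strategy is the one the paper itself uses: induction on the dimension, carrying a binary $\varepsilon$-regular mask $P$ so that the recurring object is $P\odot H$ with $H$ random, and recovering the proposition from the case $P\equiv 1$. In fact the paper's proof is exactly this induction with all quantitative content omitted: it fixes a single test pattern $L$, applies the inductive hypothesis to the products $L_\Gamma\odot H_\Gamma$, and declares the definition verified; it never confronts the union bound over the exponentially many test patterns. Your diagnosis that this union bound is the crux, and your idea of beating it via the mutual independence of the $\Theta(n)$ hyperplanes (upgrading per-hyperplane failure $2^{-c_{k-1}n^{k-1}}$ to failure $2^{-\Theta(\eta n^{k})}$ for the event that more than $\eta n$ hyperplanes fail), is precisely the quantitative skeleton the paper lacks, and that part of your argument is sound.

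The step that fails is the bookkeeping step you flag yourself: combining the at most $\varepsilon n$ hyperplanes spoiled by the mask with the $\eta n$ spoiled by random fluctuation. No hierarchy $\eta\ll\varepsilon$ can fix this, because the definition of an $\varepsilon$-regular pattern allows the mask to have exactly $\lfloor\varepsilon n\rfloor$ bad hyperplanes for some test $P''$, in which case the budget for additional random failures is zero, not $\eta n$. At the top level this is harmless, since $P\equiv 1$ makes every mask $P''\odot P_i=P''$ good, so there you genuinely have $\varepsilon n$ of slack; this is why your argument (like the paper's) is fine for $d=2$. But as soon as $d\ge 3$ the inductive hypothesis must be applied to masks that are themselves arbitrary $\varepsilon$-regular patterns --- the test patterns $P''$ become the masks one level down --- and for such a mask you must demand that \emph{all} $(1-\varepsilon)n$ good products succeed. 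The per-test failure probability is then only about $n\,2^{-c_{k-1}n^{k-1}}$, where $c_{k-1}$ is a fixed small Chernoff constant (of order $\varepsilon^{2}\rho_0\rho_\sigma$), while the test class has size $2^{\Theta(n^{k-1})}$ (already for $k-1=1$, every binary vector of density at least $\varepsilon$ is a test), so the union bound diverges; the independence boost is exactly what is unavailable when zero extra failures are allowed. Concretely, for $d=3$ you cannot certify that $P''\odot H_i$ is a $2$-dimensional pattern with failure probability $2^{-c_2n^2}$, nor even $o(1)$, so the induction does not close. Two lesser points: the uniform threshold $\rho_0$ is not self-consistent (assuming only $\rho'\ge\rho_0$ at level $k$ yields masks of density $\ge\rho_0\varepsilon(1-\varepsilon)$ at level $k-1$), and needs the level-dependent thresholds $(\varepsilon(1-\varepsilon))^{d-k}$ you implicitly describe; and one cannot escape by proving $\varepsilon'$-regularity for $\varepsilon'<\varepsilon$ and invoking monotonicity, since for $d\ge3$ the pattern property is not visibly monotone in $\varepsilon$ --- the test class itself changes with $\varepsilon$. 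To be fair, the paper's own proof does not surmount this difficulty either; but in your write-up the claim that $\eta\ll\varepsilon$ controls the overshoot is the specific assertion that is false.
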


 \begin{proof}
 
 Firstly we note that each hyperplane of a such $d$-dimensional matrix is a $(d-1)$-dimensional random matrix.
 
 Let us prove that with probability tending to $1$ the entrywise product $B$ of some $\varepsilon$-regular $d$-dimensional binary pattern $P$ and a random matrix over $\Sigma$  is an $\varepsilon$-regular $d$-dimensional pattern over $\Sigma$. Then the proposition follows from the fact that every random matrix is the entrywise product of the  $\varepsilon$-regular pattern whose entries are all $1$ and the matrix itself.
 
 We prove the statement by induction on $d$. For the $1$-dimensional case it is a direct consequence of the definition of $\varepsilon$-regular patterns and the law of large numbers.

 Let $B$ be $d$-dimensional matrix of order $n$ over $\Sigma$ obtained by the entrywise product of a random  matrix and a $d$-dimensional binary  $\varepsilon$-regular pattern $P$. Consider an arbitrary  $(d-1)$-dimensional binary $\varepsilon$-regular pattern $L$. Since $P$ is an $\varepsilon$-regular pattern, the entrywise product of at least $(1 - \varepsilon)n$ hyperplanes $\Gamma$ of $P$ of some direction and the pattern $L$ is a $(d-1)$-dimensional  binary $\varepsilon$-regular  pattern $L_\Gamma$. By the inductive assumption, the probability that the  entrywise product of the binary pattern $L_\Gamma$ and a random matrix  is a $(d-1)$-dimensional $\varepsilon$-regular pattern approaches to $1$ for large $n$.   Therefore, we have  that the definition of $d$-dimensional $\varepsilon$-regular  patterns is satisfied for the matrix $B$ with probability tending to $1$ as $n \rightarrow \infty$. 
 \end{proof}

As the last result of this paper, we state the multidimensional counting lemma. As in Theorem~\ref{count2dim}, when we count the number of appearances of $C$ in a matrix $A$, we look at the number of all submatrices of $A$ that can be obtained by from $C$ by permutations of hyperplanes.  The proof of the present theorem also follows ideas of Theorem~\ref{count2dim}.

\begin{teorema} \label{countmultidim}
For any $\delta > 0$, $t,s \in \mathbb{N}$ and $0 < \rho_{\beta} (\sigma) \leq 1$  there is $\varepsilon > 0$ such that the following holds. 
Let $\mathcal{B} = \{ B_{\beta} \}$ be a  block partition  of a $d$-dimensional matrix $A$ over $\Sigma$ into blocks of sizes $m^1_{\beta_1} \times  \cdots \times m^d_{\beta_d}$, $\beta_j= 1, \ldots, t_j$. If all  $B_{\beta}$  are $\varepsilon$-regular patterns of densities $\rho_{\beta}(\sigma)$ of symbol $\sigma \in \Sigma$, then the number of appearances of a submatrix $C$ of sizes $t_1 \times \cdots \times t_d$ in the matrix $A$ is at least 
$$ \left( 1 - \delta \right) \cdot \prod\limits_{\beta_1=1}^{t_1} m^1_{\beta_1} \cdots  \prod\limits_{\beta_d=1}^{t_d} m^d_{\beta_d} \cdot \rho_{\beta} (c_{\beta}).$$
\end{teorema}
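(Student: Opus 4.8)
The plan is to prove the theorem by induction on the dimension $d$, following the column-by-column construction of Theorem~\ref{count2dim} but with the role of Lemma~\ref{regularline} taken over by the defining property of $\varepsilon$-regular patterns. First I would reduce to a binary situation: I replace each block $B_\beta$ by the binary matrix marking the entries of $B_\beta$ equal to the prescribed symbol $c_\beta$. After this reduction a copy of $C$ is precisely a choice, for every direction $j$ and every slab index $\beta_j$, of a coordinate $x^j_{\beta_j}$ such that all $\prod_j t_j$ marked positions of the resulting grid equal $1$, and the theorem becomes a lower bound on the number of such grids. This reduction rests on an auxiliary observation, to be proved by a short induction on $d$ straight from the definition, that the indicator of a single symbol inside a $d$-dimensional $\varepsilon$-regular pattern is again a binary $\varepsilon$-regular pattern, now of density $\rho_\beta(c_\beta)$.

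For the base case $d=1$ the count is exact, equal to $\prod_{\beta_1}\rho_{\beta_1}(c_{\beta_1})\,m^1_{\beta_1}$, since a $1$-dimensional pattern has the prescribed densities by definition. For the inductive step I would single out direction $d$ and process its $t_d$ slabs one at a time, exactly as columns are processed in the $2$-dimensional proof. I would maintain a $(d-1)$-dimensional binary \emph{mask}, given separately on each $(d-1)$-dimensional block $\gamma=(\beta_1,\dots,\beta_{d-1})$, recording which $(d-1)$-dimensional positions are still consistent with the direction-$d$ coordinates chosen so far; initially each mask is the all-ones pattern. At step $\beta_d$, for a fixed $\gamma$ the current mask on $\gamma$ is a $(d-1)$-dimensional $\varepsilon$-regular pattern of some density $\rho'\ge\varepsilon$, and I would invoke the defining property of the pattern $B_{(\gamma,\beta_d)}$: for at least $(1-\varepsilon)m^d_{\beta_d}$ of the coordinates $x^d_{\beta_d}$ in the slab, the entrywise product of the mask with the corresponding hyperplane is again a $(d-1)$-dimensional $\varepsilon$-regular pattern, of density at least $\rho_{(\gamma,\beta_d)}(c_{(\gamma,\beta_d)})\,\rho'\,(1-\varepsilon)$. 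A union bound over the $t_1\cdots t_{d-1}$ blocks $\gamma$ leaves at least $(1-t_1\cdots t_{d-1}\varepsilon)\,m^d_{\beta_d}$ coordinates good for every $\gamma$ simultaneously; I keep only those and update all masks.

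Chaining the $t_d$ steps, the number of admissible selections $(x^d_1,\dots,x^d_{t_d})$ is at least $(1-t_1\cdots t_{d-1}\varepsilon)^{t_d}\prod_{\beta_d}m^d_{\beta_d}$, and for each of them the final mask on a block $\gamma$ is a $(d-1)$-dimensional $\varepsilon$-regular pattern of density at least $\prod_{\beta_d}\rho_{(\gamma,\beta_d)}(c_{(\gamma,\beta_d)})\,(1-\varepsilon)^{t_d}$. Regarding these masks as the blocks of one $(d-1)$-dimensional binary matrix, the number of ways to complete the chosen direction-$d$ coordinates in directions $1,\dots,d-1$ is exactly the number of copies of the all-ones $t_1\times\cdots\times t_{d-1}$ submatrix, which I would bound from below by the inductive hypothesis. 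Multiplying the number of good direction-$d$ selections by this count, and using $\prod_\gamma\prod_{\beta_d}\rho_{(\gamma,\beta_d)}(c_{(\gamma,\beta_d)})=\prod_\beta\rho_\beta(c_\beta)$ together with $\prod_{j<d}\prod_{\beta_j}m^j_{\beta_j}\cdot\prod_{\beta_d}m^d_{\beta_d}=\prod_{j}\prod_{\beta_j}m^j_{\beta_j}$, I expect the total to be at least
$$(1-t_1\cdots t_{d-1}\varepsilon)^{t_d}\,(1-\varepsilon)^{t_1\cdots t_d}\,(1-\delta')\prod_{j=1}^{d}\prod_{\beta_j=1}^{t_j}m^j_{\beta_j}\cdot\prod_{\beta}\rho_\beta(c_\beta),$$
so that fixing $\delta'$ and then taking $\varepsilon$ small enough makes the prefactor exceed $1-\delta$.

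The hard part will be the bookkeeping that keeps every mask a genuine $\varepsilon$-regular pattern of density above the threshold $\varepsilon$ at each of the $t_d$ steps, since the defining property of patterns may only be applied to inputs that are themselves $\varepsilon$-regular patterns of density at least $\varepsilon$ (here and throughout the pattern notion is used for box-shaped blocks, the sizes $m^j_{\beta_j}$ playing the role of the order in each direction). This forces $\varepsilon$ to be chosen small relative to $\min_\beta\rho_\beta(c_\beta)$ and to $t_1,\dots,t_d$, so that the accumulated density $\prod_{\beta_d}\rho_{(\gamma,\beta_d)}(c_{(\gamma,\beta_d)})(1-\varepsilon)^{t_d}$ never drops below $\varepsilon$. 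A related subtlety is that the inductive hypothesis is applied to masks whose densities are only controlled from below rather than prescribed; I would therefore read the counting lemma in the form where its $\varepsilon$ depends on the target densities only through a positive lower bound, which an inspection of the argument shows to be legitimate because the resulting estimate is monotone in the densities.
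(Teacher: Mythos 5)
Your proposal is correct and follows essentially the same route as the paper's proof: induction on $d$, reduction to counting all-ones subarrays in binary blocks, a slab-by-slab construction along direction $d$ in which the defining property of $\varepsilon$-regular patterns replaces Lemma~\ref{regularline} and your entrywise-product masks play exactly the role of the paper's block systems $\mathcal{V}_k$, followed by an application of the inductive hypothesis to the final masks and a multiplication of the two counts. The only deviations are minor and, if anything, tighten the paper's argument: you start the induction at $d=1$ (where the count is exact) instead of invoking Theorem~\ref{count2dim} at $d=2$, and you make explicit bookkeeping the paper leaves implicit, namely that the single-symbol indicator of a pattern is again a pattern, that mask densities must be kept above the threshold $\varepsilon$ for the defining property to be applicable, and that the inductive hypothesis must be read as monotone in densities that are only bounded from below.
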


\begin{proof}
In what follows, $\overline{\beta} = (\beta_1 , \ldots, \beta_{d-1})$ denotes the truncation of index $\beta = (\beta_1, \ldots, \beta_d)$ by the $d$-th direction.

The proof is by induction on $d$. The base case ($d= 2$) is exactly Theorem~\ref{count2dim}. 
For greater dimensions,  the main idea of the proof is to count the number of submatrices $C$ in $A$ such that each block $B_\beta$ contains exactly one entry $c_\beta$ of $C$. So we may assume that $B_\beta$ are binary blocks of densities $\rho_\beta = \rho_\beta(c_\beta)$ and we may look for many unity submatrices (having only 1-entries) instead of counting a number of a given submatrix $C$ over $\Sigma$ in $A$. Note that new blocks $B_\beta$ are still $\varepsilon$-regular patterns, possibly, with slightly different $\varepsilon$.

Let us construct many block systems $\mathcal{V} = \{ V_\beta\}$ such that each $V_\beta \subset B_\beta$ is a  $(d-1)$-dimensional binary $\varepsilon$-regular  pattern of density $(1 - \varepsilon)^{t_d} \prod\limits_{i = j}^{t_d} \rho_{\overline{\beta}, j}$ and  $V_\beta = V_{\beta'}$ for all indices $\beta$ and $\beta'$ with $\overline{\beta} = \overline{\beta'}$.

We accomplish this purpose inductively on $\beta_d$. Since all $B_\beta$ are binary   $\varepsilon$-regular patterns,  in each block $B_{\overline{\beta}, 1}$ there exist at least $(1- \varepsilon) m^d_1$ hyperplanes of the $d$-th direction that are binary $\varepsilon$-regular  patterns with density $\rho_{\overline{\beta}, 1}$.  Let a block system $\mathcal{V}_1$ be composed of all such hyperplanes. The number of ways to construct  the block system $\mathcal{V}_1$ is at least $(1 - T \varepsilon)m_1^d$, where $T =t_1 \cdots t_{d-1}.$ 

Assume that we have already constructed the block system $\mathcal{V}_{k-1}$ consisting of $(d-1)$-dimensional binary $\varepsilon$-regular  patterns $V_{\beta} $ in blocks $ B_\beta$ for indices $\beta$ with  $\beta_d = 1, \ldots, k-1$ and satisfying the following properties: density of each pattern $V_\beta$ is at least  $ (1 - \varepsilon)^{k-2} \prod\limits_{j=1}^{k-1} \rho_{\overline{\beta}, j}$ and  $V_\beta = V_{\beta'}$ if $\overline{\beta} = \overline{\beta'}$.   Moreover,  we suppose that there are at least  $(1 - T \varepsilon)^{k-1} m_1^d \cdots m^d_{k-1}$  different block systems $\mathcal{V}_{k-1}$ having these properties.  By the definition of binary $\varepsilon$-regular  patterns, for each block $B_{\overline{\beta}, k}$ there are at least $(1- \varepsilon) m^d_k$  hyperplanes of the $d$-th direction whose entrywise product with the block $V_{\overline{\beta}, j}$ is a $(d-1)$-dimensional $\varepsilon$-regular  pattern with density at least  $ (1 - \varepsilon)^{k-1} \prod\limits_{j=1}^{k} \rho_{\overline{\beta}, j}$. So we can continue the system of blocks $\mathcal{V}_{k-1}$ to a system of blocks $\mathcal{V}_{k}$ satisfying the similar conditions and such that   the density of all new blocks $V_{\beta}$  is not less than $ (1 - \varepsilon)^{k-1} \prod\limits_{j=1}^{k} \rho_{\overline{\beta}, j}$.
Note that for a given system of blocks $\mathcal{V}_{k-1}$ the number of its possible continuations is at least $(1 - T\varepsilon) m^d_{k}.$  Consequently, we have constructed at least $(1 - T \varepsilon)^{k} m^d_1  \cdots m^d_{k}$ unity blocks $\mathcal{V}_{k}$.  

Iterating the above process until $k = t_d$ gives us the required system of blocks $\mathcal{V} = \mathcal{V}_{t_d}$.
 Let $\overline{\mathcal{V}} = \{ V_{\overline{\beta}} \} $ be a system of $(d-1)$-dimensional blocks obtained as a truncation of the $\mathcal{V}$ by the $d$-dimension.  By the construction and by the inductive assumption,  the number of $(d-1)$-dimensional unity blocks of sizes $t_1 \times \cdots \times t_{d-1}$ in the multidimensional matrix formed by blocks of the system $\overline{\mathcal{V}}$ is not less than 
 $$\left( 1 - \delta' \right) \prod\limits_{\beta_1=1}^{t_1} m^1_{\beta_1} \cdots  \prod\limits_{\beta_{d-1}=1}^{t_{d-1}} m^{d-1}_{\beta_{d-1}} (1 - \varepsilon)^{t_d -1}   \prod\limits_{\beta_d=1}^{t_d} \rho_{\beta}$$
 for some $\delta' = \delta'(\varepsilon)$.

Each $(d-1)$-dimensional unity block of sizes  $t_1 \times \cdots \times t_{d-1}$ in the system $\overline{\mathcal{V}}$ easily expands to a $d$-dimensional unity block in the system $\mathcal{V}$ of sizes $t_1 \times \cdots \times t_{d}$. Since there are at least  $(1 - T \varepsilon)^{t_d} m^d_1  \cdots m^d_{t_d}$  different block systems $\overline{\mathcal{V}}$, the total number of unity blocks  with sizes  $t_1 \times \cdots \times t_{d}$ over the block partition  $\mathcal{B}$  is at least
$$\left( 1 - \delta' \right)(1 - \varepsilon)^{t_d -1}  (1 - T \varepsilon)^{t_d} \prod\limits_{\beta_1=1}^{t_1} m^1_{\beta_1} \cdots    \prod\limits_{\beta_d=1}^{t_d} m^d_{\beta_d} \rho_{\beta} \geq \left( 1 - \delta \right)  \prod\limits_{\beta_1=1}^{t_1} m^1_{\beta_1} \cdots    \prod\limits_{\beta_d=1}^{t_d} m^d_{\beta_d} \rho_{\beta}$$
for small enough $\varepsilon$.
 \end{proof}

 As in Theorem~\ref{count2dim},  to maximize the lower bound we can consider hyperplane permutations of the matrix $C$   and take the sum of bounds over all different placements of $C$ in the block partition $\mathcal{B}$. 
 
At last, we note that in the proof Theorem~\ref{countmultidim} we use regularity of blocks only with respect to patterns contained in other blocks of the matrix $A$. So, to get Theorem~\ref{countmultidim}  for a given matrix $A$ and its block partition $\mathcal{B}$ we can weaken the definition of $\varepsilon$-regular patterns by demanding regularity only with respect to patterns that can be found in the blocks of $\mathcal{B}$.

\end{document}